\newcounter{mylisti} \newcounter{mylistii}
\newcounter{nest}
\newcommand{\defaultlabel}{}
\newcommand{\bn}{\ensuremath{\mathbb N}}
\newcommand{\br}{\ensuremath{\mathbb R}}
\newcommand{\cB}{\ensuremath{\mathcal B}}
\newcommand{\cC}{\ensuremath{\mathcal C}}
\newcommand{\cF}{\ensuremath{\mathcal F}}
\newcommand{\cG}{\ensuremath{\mathcal G}}
\newcommand{\cM}{\ensuremath{\mathcal M}}
\newcommand{\cO}{\ensuremath{\mathcal O}}
\newcommand{\cP}{\ensuremath{\mathcal P}}
\newcommand{\cR}{\ensuremath{\mathcal R}}
\newcommand{\cS}{\ensuremath{\mathcal S}}
\newcommand{\cT}{\ensuremath{\mathcal T}}
\newcommand{\cU}{\ensuremath{\mathcal U}}
\newcommand{\cX}{\ensuremath{\mathcal X}}
\newcommand{\cY}{\ensuremath{\mathcal Y}}
\newcommand{\Deltat}{\ensuremath{\widetilde{\Delta}}}
\newcommand{\abs}[1]{\lvert #1\rvert}
\newcommand{\bigabs}[1]{\big\lvert #1\big\rvert}
\newcommand{\Bigabs}[1]{\Big\lvert #1\Big\rvert}
\newcommand{\cbi}{\ensuremath{\mathrm{I}_{\mathrm{CB}}}}
\newcommand{\dist}{\ensuremath{\mathrm{dist}}}
\newcommand{\fin}[1]{{[#1]}^{<\omega}}
\newcommand{\sizelek}[1]{{[#1]}^{\leq k}}
\newcommand{\norm}[1]{\lVert #1\rVert}
\newcommand{\bignorm}[1]{\big\lVert #1\big\rVert}
\newcommand{\Bignorm}[1]{\Big\lVert #1\Big\rVert}
\newcommand{\symdif}{\bigtriangleup}
\newcommand{\supp}{\operatorname{supp}}
\newcommand{\co}{\mathrm{c}_0}
\newcommand{\coo}{\mathrm{c}_{00}}
\newcommand{\mc}{\mathrm{c}}
\newcommand{\E}{\exists\,}
\newcommand{\vare}{\varepsilon}
\newcommand{\varf}{\varphi}
\newcommand{\ds}{\displaystyle}
\newcommand{\phtm}[1]{\text{\makebox[0pt]{\phantom{$#1$}}}}
\newcommand{\ie}{\textit{i.e.,}\ }
\newtheorem{thm}{Theorem}
\newtheorem{prop}[thm]{Proposition}
\newtheorem{cor}[thm]{Corollary}
\newtheorem{qn}{Question}
\theoremstyle{definition}
\theoremstyle{remark}
\newtheorem*{rem}{Remark}
\newcommand{\sepm}{\ensuremath{\cM}}
\newcommand{\sepb}{\ensuremath{\cS\cB}}
\newcommand{\cotype}{\ensuremath{\cC\cO\cT}}
\newcommand{\type}{\ensuremath{\cT\cY\cP}}
\newcommand{\sepref}{\ensuremath{\cS\cR}}
\newcommand{\lip}{\ensuremath{\mathrm{Lip}}}
\newcommand{\lipembed}{\ensuremath{\underset{\lip}{\lhook\joinrel\relbar\joinrel\rightarrow}}}
\newcommand{\clipembed}{\ensuremath{\underset{C-\lip}{\lhook\joinrel\relbar\joinrel\rightarrow}}}
\newcommand{\aiembed}{\ensuremath{\underset{\ensuremath{\mathrm{a.i.}}}{\lhook\joinrel\relbar\joinrel\rightarrow}}}
\newcommand{\isometric}{\ensuremath{\underset{=}{\lhook\joinrel\relbar\joinrel\rightarrow}}}
\newcommand{\uponelip}{\begin{sideways}\ensuremath{\underset{1-\textrm{Lip}}{\lhook\joinrel\relbar\joinrel\relbar\joinrel\relbar\joinrel\relbar\joinrel\rightarrow}}\end{sideways}}
\newcommand{\uptwolip}{\begin{sideways}\ensuremath{\underset{2-\textrm{Lip}}{\lhook\joinrel\relbar\joinrel\relbar\joinrel\relbar\joinrel\relbar\joinrel\rightarrow}}\end{sideways}}
\newcommand{\updonelip}{\begin{sideways}\ensuremath{\underset{D_1-\textrm{Lip}}{\lhook\joinrel\relbar\joinrel\relbar\joinrel\relbar\joinrel\relbar\joinrel\rightarrow}}\end{sideways}}
\newcommand{\updklip}{\begin{sideways}\ensuremath{\underset{D_k-\textrm{Lip}}{\lhook\joinrel\relbar\joinrel\relbar\joinrel\relbar\joinrel\relbar\joinrel\rightarrow}}\end{sideways}}
\newcommand{\updwlip}{\begin{sideways}\ensuremath{\underset{D_{\omega}-\textrm{Lip}}{\lhook\joinrel\relbar\joinrel\relbar\joinrel\relbar\joinrel\relbar\joinrel\rightarrow}}\end{sideways}}
\renewcommand{\cbi}{\ensuremath{i_{\mathrm{CB}}}}
\begin{document}

\title[Metric geometry of the Hamming cube]{The metric geometry of the
  Hamming cube and applications}

\author{F.~Baudier}
\address{Department of Mathematics, Texas A\&M University, College
  Station, TX 77843, USA and Institut de Math\'ematiques Jussieu-Paris
  Rive Gauche, Universit\'e Pierre et Marie Curie, Paris, France}
\email{florent@math.tamu.edu}

\author{D.~Freeman}
\address{Department of Mathematics and Computer Science, 220 N. Grand
  Blvd., St.Louis, MO 63103, USA}
\email{dfreema7@slu.edu}

\author{Th.~Schlumprecht}
\address{Department of Mathematics, Texas A\&M University, College
  Station, TX 77843, USA and Faculty of Electrical Engineering, Czech
  Technical University in Prague,  Zikova 4, 166 27, Prague, Czech
  Republic}
\email{schlump@math.tamu.edu}

\author{A.~Zs\'ak}
\address{Peterhouse, Cambridge, CB2 1RD, UK}
\email{a.zsak@dpmms.cam.ac.uk}

\date{18 March 2014}

\thanks{The first author's research was supported by ANR PDOC, project
  NoLiGeA. The second author's research was supported by NSF grant
  DMS1332255. The third author's research was supported by NSF grant
  DMS1160633. The second and fourth authors were supported by the
  Workshop in Analysis and Probability at Texas A\&M University in
  2013. The fourth author was supported by Texas A\&M University while
  he was Visiting Scholar there in 2014.}
\keywords{countable compact metric space, Lipschitz embedding, $C(K)$
  space}
\subjclass[2010]{46B20, 46B85}

\begin{abstract} The Lipschitz geometry of segments of the infinite
  Hamming cube is studied. Tight estimates on the distortion necessary
  to embed the segments into spaces of continuous functions on
  countable compact metric spaces are given. As an application, the
  first nontrivial lower bounds on the $C(K)$-distortion of important
  classes of separable Banach spaces, where $K$ is a countable compact
  space in the family
  \[
  \{ [0,\omega],[0,\omega\cdot 2],\dots,
  [0,\omega^2], \dots, [0,\omega^k\cdot n],\dots,[0,\omega^\omega]\}\ ,
  \]
  are obtained.
\end{abstract}

\maketitle

\section{introduction}

\subsection{Motivation and Background}\label{intro}

Assume that one is given a Banach space $Y$ and a class $\cC$ of metric
spaces. Given an \emph{arbitrary }metric space $M$ in the
class $\cC$, it is natural to study the smallest distortion
achievable when trying to embed $M$ into $Y$ through a bi-Lipschitz
embedding. This quite general, quantitative embedding problem is an
important topic in the nonlinear geometry of Banach Spaces. When $Y$
is a Hilbert space this problem is known as estimating the Euclidean
distortion of the given class. It is well recognized that being able
to accurately estimate the Euclidean distortion of some specific
classes of metric spaces has tremendous and far reaching
applications in both mathematics and computer science. In this paper
we consider the general embedding problem when $Y$ is the Banach
space $C(K)$, the space of continuous functions on a compact
topological space $K$. We will mainly stay in the separable world
and therefore consider only compact \emph{metric }spaces $K$ as well
as classes $\cC$ contained in the class $\sepm$ of separable metric spaces.
The theory is clearly isometric and,
although $\co$ is not isometric to a $C(K)$-space for any compact space
$K$, embeddings into $\co$ are related to those into $C(K)$-spaces.
Indeed $\co$ is a hyperplane of the space $\mc$ of convergent sequences of
real numbers, which can be seen as the space
$C(K)$ where $K=\gamma\bn$ is the Alexandrov-compactification (or
one-point compactification) of $\bn$. Moreover, it is
easy to show that whenever $K$ is an infinite (not necessarily metrizable)
compact Hausdorff space, $C(K)$ contains a subspace isometric
to $\co$ (see~\cite{AlbiacKalton2006}*{Proposition 4.3.11}). We browse
briefly and chronologically through a few classical and historical embedding
results into $C(K)$-spaces and $\co$. Back in 1906, Fr\'echet
observed~\cite{Frechet1906} that every separable metric space admits
an isometric embedding into the space $\ell_\infty(\bn)$. An easy
application of the Hahn-Banach theorem gives a linear isometric
embedding of every separable Banach space into $\ell_\infty(\bn)$.
These results can actually be cast as embedding results into a
$C(K)$-space. Indeed $\ell_\infty(\bn)$ can be identified with the
space $C(\beta\bn)$ where $\beta\bn$ denotes the Stone-\v{C}ech
compactification of $\bn$. Note that $\beta\bn$ is an uncountable
compact space and since $\ell_\infty(\bn)$ is nonseparable, $\beta\bn$
is not metrizable. The Banach-Mazur theorem~\cite{BanachMazur33}
asserts that every separable Banach space admits an (linear)
isometric embedding into the space $C[0,1]$. Note that $[0,1]$
equipped with its canonical distance is compact and hence $C[0,1]$ is
separable. With the help of Fr\'echet's embedding, it is easily seen
that every separable metric space can be isometrically embedded into
$C[0,1]$. In 1974 Aharoni proved in~\cite{Aharoni1974} that the
$\co^+$-distortion of every separable metric space is less than $6$.
In that same paper he also proved that the $\co$-distortion of
$\ell_1$ is at least $2$. A few years later Assouad~\cite{Assouad1978}
showed that the $\co^+$-distortion of every
separable metric space is at most $3$. The fact that there is a bi-Lipschitz
embedding  with distortion exactly $3$ and that this value is
optimal for embeddings into $\co^+$ is due to
Pelant~\cite{Pelant1994}. Finally the end of the story regarding
embeddings
into $\co$ was completed by Kalton and
Lancien~\cite{KaltonLancien2008} when they constructed an embedding
with distortion $2$ (respectively, $1$) for every separable, respectively
proper metric space. Recall that a metric space is \emph{proper }if
all its closed balls are compact.

\subsection{Notation and Definitions}

Let $M$ and $N$ be two metric spaces. Define the \emph{distortion }of
a map $f\colon M\to N$ to be
\[
\dist(f):= \norm{f}_{\lip}\norm{f^{-1}}_{\lip}=\Big(\sup_{x\neq y \in
M}\frac{d_N(f(x),f(y))}{d_M(x,y)}\Big)\Big(\sup_{x\neq y \in
M}\frac{d_M(x,y)}{d_N(f(x),f(y))}\Big)\ .
\]
If the distortion of
$f$ is finite, $f$ is said to be a \emph{bi-Lipschitz embedding}. The
convenient notation $M\lipembed N$ means that
there exists a bi-Lipschitz embedding $f$ from $M$ into $N$. We are concerned
with the quantitative theory, and if $\dist(f)\leq C$, we use the
notation $M \clipembed N$. The parameter
$c_{N}(M)=\inf\{C\geq 1:\,M\clipembed N\}$ will
be referred to as \emph{the $N$-distortion of $M$}.

Let $\cF$ be a collection of metric spaces. We can define \emph{the
  $N$-distortion of the class $\cF$ }as follows:
\[
c_{N}(\cF)=\sup\{c_{N}(M):\,M\in \cF\}\ .
\]
Finally, for two families $\cF$ and $\cG$ of metric spaces we define
\[
c_{\cG}(\cF)=\sup_{M\in\cF}\, \underset{N\in\cG}{\inf\phtm{p}}\,
c_{N}(M)\ .
\]
As an application of our work on the metric geometry of the Hamming
cube we will give nontrivial estimates on the parameter $c_{N}(\cF)$
for the following spaces and classes:
\begin{itemize}
\item $N=C(K)$ for some countable compact metric space $K$.
\item $\cF$ is one of the following classes:
\begin{enumerate}
\item $\sepm:=\{M:\,M\text{ separable metric space}\}$
\item $\sepb:=\{X:\, X\text{ separable Banach space}\}$
\item $\cotype:=\{X:\, X\text{ separable Banach space with nontrivial cotype}\}$
\item $\type:=\{X:\, X\text{ separable Banach space with nontrivial type}\}$
\item $\sepref:=\{X:\, X\text{ separable, reflexive Banach space}\}$.
\end{enumerate}
\end{itemize}
Observe that $c_{N}(\sepb)=c_{N}(\sepm)$. Indeed, it is clear that
$c_{N}(\sepb)\leq c_{N}(\sepm)$, and the reverse inequality follows
from the fact that every separable metric space embeds isometrically
into the separable Banach space $C[0,1]$.

\subsection{Stratification of the Hamming cube}

We define a \emph{stratification }of a metric space $M$ to be a
sequence $M_1\subset M_2\subset \dots$ of subsets of $M$ such that
$M=\bigcup _{k=1}^\infty M_k$. (More generally, it is a way of
expressing $M$ as a direct limit of metric spaces, but this
generality will not be needed here.) The sets $M_k$ are the
\emph{segments }of $M$, and the sets $M_k\setminus M_{k-1}$ are the
\emph{layers }of $M$ (where we put $M_0=\emptyset$). In this paper
we are concerned with stratifications of
the Hamming cube. The infinite Hamming cube $H_\infty$
is the set of all infinite sequences in $\{0,1\}$  containing only
finitely many $1$s equipped with the Hamming  metric $d_H$, where
$d_H(\sigma,\tau)=\abs{\{i\in\bn:\, \sigma_i\neq\tau_i\}}$. It is
isometric to the metric space $\Delta_\infty$ consisting of the set
$\fin{\bn}$ of all finite subsets of $\bn$ equipped with the symmetric
difference metric $d_\Delta$, where $d_\Delta(A,B)=\abs{A\symdif
  B}$. The isometry between $\Delta_\infty$ and $H_\infty$ is the
natural one identifying a set with its indicator function.

We describe two natural stratifications of the infinite Hamming
cube. For $k\in\bn$ let $H_k=\{0,1\}^k$ thought of as a subset of
$H_\infty$ by extending elements of $H_k$ to infinite binary sequences
with the addition of an infinite tail of $0$s. The layers of the
stratification $(H_k)_{k=0}^\infty$ are $\{\emptyset\}$ and families
of sets of the form $\{ A\subset\bn:\,\max A=n\}$, $n\in\bn$.
The members of the second stratification are the
families $\Delta_k=\sizelek{\bn}$ of subsets of $\bn$ of size at
most~$k$.  The set  $\Delta_k$ can be identified with the rooted
countably infinitely branching tree of height $k$. Note, however, that
the metric $d_\symdif$ is not the classical graph metric of a tree.

The two stratifications share some essential metric
properties despite being quite different from the combinatorial and
structural standpoint. For example, $\Delta_k$ (respectively, $H_k$)
is a $2k$-bounded (respectively, $k$-bounded), $1$-separated metric
space. However, $\Delta_k$ is a countable non-proper metric space while
$H_k$ is a finite metric space. The two stratifications are different
in the Lipschitz category in the following sense. Two families of
metric spaces $\cF$ and $\cG$ shall be called \emph{Lipschitz
  equivalent }if $c_{\cF}(\cG)c_{\cG}(\cF)<\infty$. The
stratifications $\cO=(H_k)_{k\geq 0}$ and $\cU=(\Delta_k)_{k\geq 0}$ are
not Lipschitz equivalent. Indeed, the embedding
$(\sigma_1,\cdots,\sigma_k)\mapsto \big\{
i\in\{1,\dots,k\}:\,\sigma_i=1\big\}$ sends $H_k$ isometrically into
$\Delta_k$ (\ie $c_{\cU}(\cO)=1$), however, $c_{\cO}(\cU)=\infty$
since it is impossible to embed a single $\Delta_k$ bi-Lipschitzly
into any $H_i$ because of a cardinality obstruction (assuming $k\geq
1$ of course).

Sometimes metric information about a stratification can be used to
derive metric information on the stratified space and
vice-versa. However, this need not be the case. As we will see
$H_\infty$ does not embed isometrically into $\co$ and this will be
witnessed by $\Delta_2$. This is in stark contrast with the fact that
every $H_k$, as any finite metric space, embeds isometrically into
$\co$. So in some sense $(\Delta_k)_{k\geq 0}$ captures more of the
structure of $\Delta_\infty$.

\subsection{Organization of the paper.}\label{organization}
From now on we will consider \emph{countable }compact
metric spaces and we will focus on the following nested family:
\[
[0,\omega]\subset[0,\omega\cdot 2] \subset
\dots\subset[0,\omega^2]\subset\dots\subset[0,\omega^\alpha\cdot
  n]\subset\cdots\subset[0,\omega^\omega]\ ,
\]
where, as usual, $\omega$ is the first infinite ordinal. It is a
simple fact that if compact spaces $K$ and $L$ are homeomorphic
then the Banach spaces $C(K)$ and $ C(L)$ are isometrically
isomorphic. Note that the converse is also true by the Banach-Stone
theorem.  Therefore the $C(K)$-spaces arising from the nested family
above are mutually non isometric Banach spaces.
However, this family has the property that $C(K)$ embeds linearly
isometrically into $ C(L)$ whenever $K\subset L$ since then $K$ is
in fact a clopen subset of $L$.

In Section~\ref{upperbound} we estimate from above the
$C(K)$-distortion of the infinite Hamming cube and its stratification
$\Delta_k$. We will show that when $1\leq r\leq k<\infty$, then
$c_{C([0,\omega^r])}(\Delta_k)\leq\min\big\{\frac{k}{r},2\big\}$. In
particular, $\Delta_k$ embeds isometrically into $C([0,\omega^k])$.
In Section~\ref{lowerbound} we will give lower bounds. To estimate
$c_{C([0,\omega^r])}(\Delta_k)$ from below, we exhibit a connection
between a topological property of the compact space $K$ and the
$C(K)$-distortion of the metric spaces $\Delta_k$. Roughly speaking,
if the compact metric space $K$ is small in the sense of the
Cantor-Bendixson derivation, then the $C(K)$-distortion of $\Delta_k$
cannot be too small. More precisely, we show that if the
Cantor-Bendixson index of $K$ is $k\geq 2$, then
$c_{C(K)}(\Delta_k)\geq \frac{k}{k-1}$. In Section~\ref{applications}
we give some applications concerning the parameters $c_{C(K)}(\sepm)$,
$c_{C(K)}(\sepb)$, $c_{C(K)}(\cotype)$ and $c_{C(K)}(\sepref)$. We
conclude with a few open questions that arise naturally from our work.

\section{Low distortion embeddings of the Hamming cube}\label{upperbound}

\subsection{Embeddings of the sets $\boldsymbol{\Delta_k}$.}
We will show, by constructing suitable bi-Lipschitz embeddings, that
when $1\leq r\leq k<\infty$, then
$c_{C([0,\omega^r])}(\Delta_k)\leq\min\big\{\frac{k}{r},2\big\}$.  In
particular, $\Delta_k$ embeds isometrically into $C([0,\omega^k])$,
and hence also into $C([0,\omega^r])$ for $r\geq k$.

We will need a description of $C(K)$-spaces as \emph{tree
  spaces}, due to Bourgain~\cite{bourgain:79} (see
also~\cite{odell:04}), which we now proceed to describe. Recall that a
\emph{tree }is a set $T$ with a partial
order $\preccurlyeq$ such that $b_t=\{ s\in T:\,s\preccurlyeq t\}$ is
finite and linearly ordered by~$\preccurlyeq$ for all $t\in T$. The
space $\coo(T)$ consists of all functions $x\colon T\to\br$ with $\{
t\in T:\,x(t)\neq 0\}$ is finite. The unit vector basis $(e_t)_{t\in
  T}$ of $\coo(T)$ consists of functions $e_t$ taking the value~$1$
at~$t$ and $0$ everywhere else. For $t\in T$ the functional
$\beta_t$ is defined by summing along the branch $b_t$:
\[
\beta_t(x)=\sum _{s\in b_t} x(s)\qquad (x\in \coo(T))\ .
\]
We define a norm $\norm{\cdot}$ on $\coo(T)$ by letting
\[
\norm{x}=\sup_{t\in T} \abs{\beta_t(x)}\qquad (x\in \coo(T))\ .
\]
The tree space corresponding to $T$ is the completion $S(T)$ of
$(\coo(T),\norm{\cdot})$. It is easy to verify that $(e_t)$ is a
normalized, monotone basis of $S(T)$. Note that the branch functionals
can be expressed in terms of the biorthogonal functional as follows:
$\beta_t=\sum_{s\in b_t} e^*_s$. We now let $K$ be the $w^*$-closure
in $S(T)^*$ of the set $\{\beta_t:\, t\in T\}$. This is a compact
Hausdorff space and $0\in K$ if and only if $T$ has infinitely many
\emph{initial nodes }(\ie elements $t\in T$ for which $s\preccurlyeq
t$ implies $s=t$). The restriction to $K$ of the canonical embedding of
$S(T)$ into $S(T)^{**}$ is an isometric isomorphism $S(T)\to C(K)$. By
the Stone-Weierstrass theorem it is onto $C(K)$ if $0\notin K$ and
onto $C_0(K)$ (functions vanishing at $0$) if $0\in K$. It turns out
that every $C(K)$-space with separable dual can be represented as a
tree space but we will not need this result in its full generality. We
will now mention the examples relevant to us.

For each $k\in\bn$ let $T_k$ be the tree
$\big(\sizelek{\bn},\preccurlyeq\big)$, where
$s\preccurlyeq t$ if and only if $s$ is an initial segment of
$t$. Thus $T_k$ is the rooted, countably infinitely branching tree of
height~$k$. As usual, we identify a set $t\subset \bn$ with the
sequence $i_1,i_2,\dots$, where $i_1<i_2<\dots$ are the elements of
$t$. So, for example, we shall write $e_m$ for the basis element
$e_{\{m\}}$ of $S(T_k)$, etc. The set $\{\beta_t:\,t\in T_k\}$ is
homeomorphic to $(0,\omega^k]$ (and hence to $[0,\omega^k]$) via the
map $\beta_\emptyset\mapsto \omega^k$ and
\[
(i_1,\dots,i_r)\mapsto \sum_{j=1}^{r-1} \omega
^{k-j}(i_j-i_{j-1}-1)+\omega^{k-r}(i_r-i_{r-1})\ ,
\]
for $1\leq r\leq k,\ i_1<\dots<i_r$ (and with $i_0=0$). Thus
$S(T_k)\cong C([0,\omega^k])$.
Let us now denote by $T$ the disjoint union of the trees $T_k$. For
$s,t\in T$ we have $s\preccurlyeq t$ if and only if for some $k$ both
$s$ and $t$ belong to $T_k$ and $s\preccurlyeq t$ in $T_k$. The tree
space $S(T)$ is then isometrically
isomorphic to $C_0([0,\omega^\omega))$ which of course isometrically
embeds into $C([0,\omega^\omega])$. Note also that $S(T)\cong
\big(\bigoplus_{k=1}^\infty S(T_k)\big)_{\co}$. For the rest of the
paper we fix $T_k$ and $T$ to be trees just described.
\begin{thm}
  \label{thm:Delta_k-into-ST_r}
  For every $1\leq r\leq k$ there exist a map $\varf_{k,r}\colon
  \Delta_k\to  C([0,\omega^r])$ such that
  $\dist(\varf_{k,r})\leq\frac{k}{r}$. It follows that
  $c_{C([0,\omega^r])}(\Delta_k)\leq \min \big\{ \frac{k}{r},2\big\}$.
\end{thm}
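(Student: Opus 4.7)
The plan is to construct, for each $1 \leq r \leq k$, an explicit bi-Lipschitz embedding $\varphi_{k,r} \colon \Delta_k \to S(T_r) \cong C([0,\omega^r])$ with $\dist(\varphi_{k,r}) \leq k/r$. Once this is in hand, the bound $c_{C([0,\omega^r])}(\Delta_k) \leq \min\{k/r, 2\}$ follows by combining it with an embedding of distortion $2$ coming from the Kalton--Lancien theorem: $\Delta_k \hookrightarrow \co$, composed with the isometric inclusion $\co \hookrightarrow C([0,\omega^r])$ recalled in the introduction.

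The construction I have in mind is: to a set $A = \{i_1 < i_2 < \dots < i_m\} \in \Delta_k$ (with $m \leq k$), associate a node $t(A)$ in $T_r$ and define $\varphi_{k,r}(A)$ as a carefully weighted linear combination of basis vectors $\{e_s : s \preccurlyeq t(A)\}$ supported on the branch from $\emptyset$ to $t(A)$. I would first settle the isometric case $r = k$, where $t(A) = (i_1, \dots, i_m) \in T_k$ is natural and the coefficients must be tuned so that for every pair $(A, B)$, the supremum $\sup_{t \in T_k} |\beta_t(\varphi_{k,k}(A) - \varphi_{k,k}(B))|$ equals exactly $|A \triangle B|$. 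The baby case $k = 1$, in which $\varphi_{1,1}(\emptyset) = 0$ and $\varphi_{1,1}(\{n\}) = 2 e_{(n)} - e_\emptyset$ already furnishes an isometry into $C([0,\omega])$, shows that the coefficients cannot all be $+1$ and must mix positive and negative weights in a specific way.

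For general $1 \leq r \leq k$, the plan is to \emph{coarsen} the input: partition the set of positions $\{1, 2, \dots, k\}$ into $r$ consecutive blocks of size at most $\lceil k/r \rceil$, and define $\varphi_{k,r}(A)$ by selecting one representative element of $A$ from each nonempty block to form a tuple in $T_r$, then applying the isometric recipe from the $r = k$ case (adapted to this shorter tuple). The distortion factor $k/r$ appears precisely because symmetric differences of $A$ and $B$ concentrated inside a single block may compress into a change of a single coordinate of the node, while on the other hand a unit symmetric difference straddling two different blocks still costs at least one unit in the target.

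The main obstacle I anticipate is the combined verification: for every branch $t \in T_r$ and every pair $(A, B) \in \Delta_k \times \Delta_k$, one has to bound $|\beta_t(\varphi_{k,r}(A) - \varphi_{k,r}(B))|$ above by $|A \triangle B|$, and to exhibit at least one branch realizing the reciprocal lower bound $(r/k) |A \triangle B|$. The isometric case $r = k$ is the core difficulty, since naive choices such as $\varphi(A) = \sum_{\ell = 0}^{m} e_{(i_1, \dots, i_\ell)}$ or $\varphi(A) = 2 e_{(i_1, \dots, i_m)} - e_\emptyset$ already exhibit distortion $2$ and must be replaced by a more subtle pattern of weights depending on the depth of each branch.
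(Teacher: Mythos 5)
Your high-level skeleton matches the paper's (an explicit embedding into the tree space $S(T_r)\cong C([0,\omega^r])$, with the $\min\{\frac{k}{r},2\}$ obtained by invoking Kalton--Lancien for the constant $2$), and you correctly sense that all-positive coefficients only give distortion $2$. But the core construction you propose --- supporting $\varphi(A)$ on the single branch $b_{t(A)}$, $t(A)=(i_1,\dots,i_m)$ --- provably cannot yield an isometric embedding of $\Delta_k$ into $S(T_k)$ once $k\geq 2$, so the ``main obstacle'' you anticipate is not a verification to be completed but a dead end. Concretely, for $k=2$: normalise $\varphi(\emptyset)=0$ (subtract the multiple of $e_\emptyset$ that $\varphi(\emptyset)$ must be; branch-supportedness is preserved since $\emptyset\preccurlyeq t$ for every $t$). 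Write $\varphi(\{n\})=p_ne_\emptyset+q_ne_{\{n\}}$, so $\max(\abs{p_n},\abs{p_n+q_n})=1$, and for the disjoint doubletons $A_i=\{2i-1,2i\}$ write $\varphi(A_i)=u_ie_\emptyset+(v_i-u_i)e_{\{2i-1\}}+(w_i-v_i)e_{\{2i-1,2i\}}$, so that the branch sums of $\varphi(A_i)$ are exactly $u_i,v_i,w_i$ and $\max(\abs{u_i},\abs{v_i},\abs{w_i})=2$. The supports of $\varphi(A_i)$ and $\varphi(A_j)$ meet only in $\emptyset$, and a branch contains at most one of the incomparable nodes $\{2i-1\},\{2j-1\}$, so every branch functional sends $\varphi(A_i)-\varphi(A_j)$ to one of $u_i-u_j$, $v_i-u_j$, $w_i-u_j$, $u_i-v_j$, $u_i-w_j$. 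For one of these to equal $\pm 4=\pm d_\symdif(A_i,A_j)$ its two constituents must be $2$ and $-2$; since each candidate involves $u_i$ or $u_j$, every pair forces at least one of $u_i,u_j$ into $\{\pm2\}$, hence at most one index has $u_i\notin\{\pm 2\}$. Moreover, if $u_i=2$ then $\norm{\varphi(A_i)-\varphi(\{2i\})}=1$ (branches missing both $\{2i-1\}$ and $\{2i\}$, then branches through $\{2i\}$) forces $p_{2i}=1$ and $p_{2i}+q_{2i}=1$, i.e.\ $\varphi(\{2i\})=e_\emptyset$; two indices with $u=2$ would then give $\varphi(\{2i\})=\varphi(\{2j\})$ while $d_\symdif(\{2i\},\{2j\})=2$. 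Symmetrically for $u=-2$. So at most three disjoint doubletons can be handled: already the finite subset $\{\emptyset\}\cup\{\{n\}:n\leq 8\}\cup\{\{1,2\},\{3,4\},\{5,6\},\{7,8\}\}$ of $\Delta_2$ defeats any branch-supported map. The moral is that a single branch is too thin a support: past $e_\emptyset$, a branch functional can see into at most one of $\varphi(A),\varphi(B)$.

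The paper's construction escapes this by being additive and spread out: $\varphi_{k,r}(\sigma)=\sum_{m\in\sigma}f_r(m)$, where $f_r(m)$ puts $-1$ on every singleton node $\{i\}$ with $i<m$, $+1$ on $\{m\}$, and $+2$ on \emph{every} node of depth $2,\dots,r$ whose maximum is $m$. Each $f_r(m)$ has branch sums in $\{-1,0,1\}$, so the Lipschitz upper bound is the triangle inequality, and the lower bound is an induction on $\max\{s,t\}$ producing a single branch functional $\beta_{\ell_1,\dots,\ell_u}$ that simultaneously collects contributions from all elements of $\sigma\symdif\tau$ --- possible only because each $f_r(m)$ lives on every branch. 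Your coarsening scheme for $r<k$ has a separate defect: $\Delta_k=\sizelek{\bn}$ has ground set $\bn$, not $\{1,\dots,k\}$, so partitioning ``the positions $\{1,\dots,k\}$'' does not parse; and any rule selecting one representative per block identifies distinct sets with equal representatives (e.g.\ $\{1,2\}$ and $\{1,3\}$ under min-representatives), destroying injectivity and hence any lower Lipschitz bound. In the paper the factor $\frac{k}{r}$ is not obtained by coarsening the domain at all; it falls out of the induction because the witnessing branch has length at most $r$ while up to $k$ elements must be accounted for.
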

\begin{proof}
  For each $r\in\bn$ we define the map
  \[
  f_r\colon \bn \to S(T_r)\ ,\qquad m \mapsto -\sum_{i=1}^{m-1} e_i +
  e_m + 2\sum_{j=2}^r\ \sum_{%
    \begin{subarray}{c}
      i_1<\dots <i_j\\
      i_j=m
    \end{subarray}%
  } e_{i_1,\dots,i_j}\ .
  \]
  Then for $1\leq r\leq k$ define
  \[
  \varf_{k,r}\colon \Delta_k \to S(T_{r})\ ,\qquad \sigma\mapsto
  \sum_{m\in \sigma} f_{r}(m)\ .
  \]
  Let $\sigma,\tau\in\Delta_k$. We will show that
  \[
  \frac{r}{k}\ d_\symdif(\sigma,\tau)\leq
  \norm{\varf_{k,r}(\sigma)-\varf_{k,r}(\tau)} \leq
  d_\symdif(\sigma,\tau)\ .
  \]
  Let $i_1<\dots <i_s$ and $j_1<\dots <j_t$ be the elements of
  $\sigma\setminus \tau$ and $\tau\setminus\sigma$, respectively. We
  need to show that
  \[
  \frac{r}{k} (s+t)\leq \norm{f_{r}(i_1)+\dots +f_{r}(i_s)-
    f_{r}(j_1) - \dots - f_{r}(j_t)} \leq (s+t)\ .
  \]
  The upper bound follows from the triangle inequality. Indeed, for
  each $m\in\bn$ and for each $t\in T_r$, summing $f_r(m)$ along the
  branch $b_t$ yields the values $-1,0,1$, and hence $f_r(m)$ is of
  norm~$1$. To see the lower bound, first note that we
  can assume without loss of generality that $1\leq s$ and that either
  $t=0$ or $i_1<j_1$. We will then prove the following statement by
  induction on $\max\{s,t\}$. Given $s+t$ distinct positive integers
  $i_1<\dots <i_s$ and $j_1<\dots <j_t$, where $1\leq s\leq k$ and
  either $t=0$ or $1\leq t\leq k$ and $i_1<j_1$, setting
  \[
  g=f_{r}(i_1)+\dots +f_{r}(i_s)- f_{r}(j_1) - \dots - f_{r}(j_t)\ ,
  \]
  there is a branch functional $\beta_{\ell_1,\dots,\ell_u}$ with
  $1\leq u\leq r$ and $i_1\leq\ell_1$ such that
  \[
  \abs{\beta_{\ell_1,\dots,\ell_u}(g)}=\Bigabs{\sum_{v=1}^u
  e^*_{\ell_1,\dots,\ell_v}(g)} \geq \frac{r}{\max\{r,s,t\}} (s+t)\ .
  \]
  This clearly implies the lower bound of $\frac{r}{k}(s+t)$ on the
  norm of $g$.

  If $s\leq r$ or $r\leq s\leq t$, then for $u=\min\{r,s\}$ we have
  \begin{align*}
    \beta_{i_1,\dots,i_u}(g) & = e^*_{i_1}\Big(f_{r}(i_1)+\sum_{m=2}^s
    f_r(i_m)-\sum_{n=1}^t f_r(j_n)\Big)+ \sum_{v=2}^u
    e^*_{i_1,...,i_v}\big(f_r(i_v)\big)\\
    &=1-(s-1)+t+2(u-1)=-s+t+2u\ .
  \end{align*}  
  When $s\leq r$, then $-s+t+2u=s+t$, and we are done. If $r\leq s\leq
  t$, then
  \[
  -s+t+2u\geq 2r=\frac{r}{\frac12(s+t)} (s+t)\geq
  \frac{r}{\max\{r,s,t\}} (s+t)\ ,
  \]
  as required. We finally deal with the case when $r<s$ and $t<s$. Set
  \[
  h=f_{r}(i_2)+\dots +f_{r}(i_s)- f_{r}(j_1) - \dots - f_{r}(j_t)\ .
  \]
  If $t=0$ or $i_2<j_1$, then we apply the induction hypothesis to
  $h$, and if $j_1<i_2$, then we apply the induction hypothesis to
  $-h$. In either case we obtain a branch functional
  $\beta_{\ell_1,\dots,\ell_u}$ such that $i_1<\ell_1$ and
  $\abs{\beta_{\ell_1,\dots,\ell_u}(h)}\geq \frac{r}{s-1}
  (s+t-1)$. Since $i_1<\ell_1$, we have
  $\beta_{\ell_1,\dots,\ell_u}\big(f_r(i_1)\big)=0$, and it follows
  that
  \[
  \abs{\beta_{\ell_1,\dots,\ell_u}(g)}=\abs{\beta_{\ell_1,\dots,\ell_u}(h)}
  \geq  \frac{r}{s-1}(s+t-1) \geq \frac{r}{s}(s+t)\ .
  \]
  This completes the proof that
  $\dist(\varf_{k,r})\leq\frac{k}{r}$. Recall that Kalton and
  Lancien~\cite{KaltonLancien2008} proved that every separable metric
  space embeds into $\co$ with distortion at most~$2$. It follows that
  $c_{C([0,\omega^r])}(\Delta_k)\leq \min \big\{ \frac{k}{r},2\big\}$.
\end{proof}

\subsection{$\boldsymbol{C([0,\omega^\omega])}$-distortion of the
  Hamming cube}

It follows from Theorem~\ref{thm:Delta_k-into-ST_r} that each
$\Delta_k$ embeds isometrically into $C([0,\omega^\omega])$. We now
prove a stronger result. Recall that a set $A\subset\bn$ is a
\emph{Schreier set }if $\abs{A}\leq\min A$ (or if $A=\emptyset$). The
Schreier family, the set of all Schreier sets, is denoted by
$\cS_1$. We endow $\cS_1$ with the symmetric difference metric, \ie we
consider $\cS_1$ as a subset of $\Delta_\infty$.
\begin{thm}
  \label{thm:schreier-into-C-omega}
  $(\cS_1,d_{\symdif})$ embeds isometrically into $C([0,\omega^\omega])$.
\end{thm}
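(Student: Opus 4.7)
The plan is to build the isometric embedding directly, bypassing further tree-space machinery, by exhibiting $\cS_1$ itself as a compact topological space homeomorphic to $[0,\omega^\omega]$ and writing down a natural signed-indicator map $\cS_1 \to C(\cS_1)$ whose supremum norm precisely recovers the symmetric-difference metric.

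The first step is the topological identification $\cS_1 \cong [0,\omega^\omega]$. Equipped with the topology it inherits from $\{0,1\}^\bn$ via indicator functions, $\cS_1$ is a closed, hence compact, subset of the Cantor space: if $\sigma_n \to \sigma$ pointwise with $\sigma_n \in \cS_1$, then $\min \sigma_n$ is eventually $\min \sigma$, and $\sigma$ must be finite (otherwise $\abs{\sigma_n}$ would eventually exceed $\min \sigma_n$), so $\sigma \in \cS_1$. A straightforward induction on $n$ shows
\[
  \cS_1^{(n)} = \{\sigma \in \cS_1 : \min \sigma \geq \abs{\sigma} + n\} \cup \{\emptyset\},
\]
whence $\cS_1^{(\omega)} = \{\emptyset\}$ and $\cS_1^{(\omega+1)} = \emptyset$. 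Thus $\cS_1$ has Cantor--Bendixson rank $\omega + 1$ with a unique top point, and since $[0,\omega^\omega]$ has the same invariants the Mazurkiewicz--Sierpinski classification yields the homeomorphism, giving $C(\cS_1) \cong C([0,\omega^\omega])$ isometrically.

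The second step is to define $\psi \colon \cS_1 \to C(\cS_1)$ by
\[
  \psi(\sigma)(\rho) = 2\abs{\sigma \cap \rho} - \abs{\sigma}.
\]
Continuity of each $\psi(\sigma)$ is automatic: since $\sigma$ is finite, $\psi(\sigma)$ is a finite $\br$-linear combination of continuous coordinate functions $\rho \mapsto 1_{\{i \in \rho\}}$. To verify the isometry, given $\sigma, \tau \in \cS_1$ set $A = \sigma \setminus \tau$, $B = \tau \setminus \sigma$, $p = \abs{A}$, $q = \abs{B}$; for $\rho \in \cS_1$ with $a = \abs{A \cap \rho}$ and $b = \abs{B \cap \rho}$ one computes
\[
  \psi(\sigma)(\rho) - \psi(\tau)(\rho) = 2(a - b) - (p - q),
\]
a quantity lying in $[-(p+q),\, p+q]$ since $0 \leq a \leq p$ and $0 \leq b \leq q$. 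The upper extreme $p+q$ is attained at $\rho = \sigma$ (where $a = p$, $b = 0$) and the lower extreme $-(p+q)$ at $\rho = \tau$; both witnesses are Schreier and hence in $\cS_1$, so $\norm{\psi(\sigma) - \psi(\tau)}_\infty = p + q = d_\symdif(\sigma,\tau)$. The main obstacle is the Cantor--Bendixson and Mazurkiewicz--Sierpinski identification; once $\cS_1 \cong [0,\omega^\omega]$ is in hand the embedding $\psi$ is essentially forced, being the natural way to convert the $\ell_1$-type symmetric-difference metric into an $\ell_\infty$-norm using $\sigma$ and $\tau$ themselves as extremal test points.
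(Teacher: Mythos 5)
Your proof is correct, but it takes a genuinely different route from the paper. The paper reuses the tree-space machinery of Theorem~\ref{thm:Delta_k-into-ST_r}: it sends $m\mapsto\sum_{k=1}^m f_k(m)$ into $S(T)\cong\big(\bigoplus_k S(T_k)\big)_{\co}\cong C_0([0,\omega^\omega))$ and uses the Schreier condition $\abs{\sigma}\leq\min\sigma$ to guarantee that the component indexed by $k=\min(\sigma\setminus\tau)$ already realises the full distance via a branch functional. You instead (a) observe that $\cS_1$, viewed inside $\{0,1\}^{\bn}$, is itself a countable compact space with $\cS_1^{(\omega)}=\{\emptyset\}$ and $\cS_1^{(\omega+1)}=\emptyset$, hence homeomorphic to $[0,\omega^\omega]$ by Mazurkiewicz--Sierpi\'nski (a classification the paper itself quotes in Section~\ref{lowerbound}), and (b) use the Kuratowski embedding based at $\emptyset$: your $\psi(\sigma)(\rho)=2\abs{\sigma\cap\rho}-\abs{\sigma}$ is exactly $d_\symdif(\emptyset,\rho)-d_\symdif(\sigma,\rho)$, and the real content is that these functions, unlike $\rho\mapsto\abs{\rho}$ itself, are continuous on $\cS_1$ because each depends on only finitely many coordinates. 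I checked your derived-set induction and the extremal computation (witnesses $\rho=\sigma$ and $\rho=\tau$); both are sound. What each approach buys: yours is shorter and more conceptual for this one statement, and it explains \emph{why} $[0,\omega^\omega]$ is the right target (it is the homeomorphism type of $\cS_1$ itself). The paper's construction is heavier but is not an isolated argument --- the same maps $f_k$, with the admissibility condition $\abs{\sigma}\leq N_{\min\sigma}$ replacing the Schreier condition, immediately yield the almost isometric embedding of all of $\Delta_\infty$ in Theorem~\ref{thm:Delta-ai-into-C-omega}. Your Kuratowski trick does not extend there: the closure of $\Delta_\infty$ in $\{0,1\}^{\bn}$ is the full (uncountable) Cantor set, so the analogous target would no longer be a $C(K)$ with $K$ countable.
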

\begin{proof}
  Define
  \[
  f_\omega\colon \bn \to S(T)\ ,\qquad m \mapsto \sum_{k=1}^m f_k(m)\
  ,
  \]
  where $f_k$, $k\in\bn$, are the functions defined in
  Theorem~\ref{thm:Delta_k-into-ST_r}. Here we identify $x\in S(T_k)$
  with the sequence in $S(T)\cong \big(\bigoplus_{k=1}^\infty
  S(T_k)\big)_{\co}$ that has $x$ in the $k^{\text{th}}$ co-ordinate
  and zero everywhere else. Thus, more precisely,  $f_\omega(m)$ is
  the sequence
  $\big(f_1(m),\dots,f_m(m),0,0,\dots\big)$. We next define
  \[
  \varf_\omega\colon \cS_1 \to S(T)\ ,\qquad \sigma \mapsto \sum_{m\in
    \sigma} f_\omega(m)\ ,
  \]
  and claim that this is an isometric embedding. As before, this
  amounts to showing that if $\sigma,\tau\in\cS_1$ and $i_1<\dots
  <i_s$ and $j_1<\dots <j_t$ are the elements of $\sigma\setminus\tau$
  and $\tau\setminus\sigma$, respectively, then
  \[
  \norm{f_\omega(i_1)+\dots +f_\omega(i_s)-
    f_\omega(j_1) - \dots - f_\omega(j_t)} = s+t\ .
  \]
  Setting $g=f_\omega(i_1)+\dots +f_\omega(i_s)- f_\omega(j_1) - \dots
  - f_\omega(j_t)$, we have $\norm{g}\leq s+t$ by the triangle
  inequality. Indeed, for each $m\in\bn$ we have
  \[
  \norm{f_\omega(m)}=\max _{1\leq k\leq m} \norm{f_k(m)}=1\ .
  \]
  To get the lower bound, we may assume without loss of generality that
  $i_1<j_1$ (or $t=0$) and consider the $k^{\text{th}}$ component of $g$ in
  $S(T_k)$ where $k=i_1$. We will show that
  \[
  \norm{f_k(i_1)+\dots +f_k(i_s)-
    f_k(j_1) - \dots - f_k(j_t)} = s+t\ .
  \]
  Note that $s\leq \abs{\sigma}\leq\min \sigma\leq i_1=k$. It follows that we
  can get the lower bound $s+t$ by applying the branch functional
  $\beta_{i_1,\dots,i_s}$ in $T_k$ as in the proof of
  Theorem~\ref{thm:Delta_k-into-ST_r}.
\end{proof}

We now turn our attention to the infinite Hamming cube. With the help
of Theorem~\ref{thm:schreier-into-C-omega} we are now able to embed
the infinite Hamming cube into $C([0,\omega^\omega])$ with arbitrarily
small distortion. We say that $M$ embeds \emph{almost isometrically
}into $N$, denoted by $M\aiembed N$, if for every $\vare>0$
there exist a bi-Lipschitz embedding $f$ from $M$ into $N$ with
$\textrm{dist}(f)\leq 1+\vare$.
\begin{thm}
  \label{thm:Delta-ai-into-C-omega}
  The infinite Hamming cube $\Delta_\infty$ embeds almost isometrically into
  $C([0,\omega^\omega])$.
\end{thm}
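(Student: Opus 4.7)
Given $\vare>0$, choose an integer $N\geq 1+1/\vare$, so that $N/(N-1)\leq 1+\vare$. The plan is to follow the construction of Theorem~\ref{thm:schreier-into-C-omega} but enlarge the effective support of each elementary block from $r\leq m$ to $r\leq Nm$, so that for every pair $\sigma,\tau$ (Schreier or not) a single coordinate will see nearly all of $\sigma\symdif\tau$. Explicitly, for $m\in\bn$ set
\[
f^N_\omega(m)\in S(T)\cong\Big(\bigoplus_{r=1}^\infty S(T_r)\Big)_{\co},\qquad
\big(f^N_\omega(m)\big)_r=\begin{cases}f_r(m)&\text{if }1\leq r\leq Nm,\\ 0&\text{otherwise,}\end{cases}
\]
with $f_r$ as in Theorem~\ref{thm:Delta_k-into-ST_r}, and then define
\[
\varf_\vare\colon\Delta_\infty\to S(T),\qquad \sigma\mapsto\sum_{m\in\sigma}f^N_\omega(m).
\]
For each finite $\sigma$ the vector $\varf_\vare(\sigma)$ is supported on the coordinates $r\leq N\max\sigma$, so it indeed lies in $C_0([0,\omega^\omega))\subset C([0,\omega^\omega])$. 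Since $\|f^N_\omega(m)\|=\max_{r\leq Nm}\|f_r(m)\|=1$, the triangle inequality yields the upper bound $\|\varf_\vare(\sigma)-\varf_\vare(\tau)\|\leq d_\symdif(\sigma,\tau)$.

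For the lower bound, fix $\sigma\neq\tau$ in $\Delta_\infty$, enumerate the elements of $\sigma\setminus\tau$ and $\tau\setminus\sigma$ as $i_1<\dots<i_s$ and $j_1<\dots<j_t$, and after possibly swapping $\sigma\leftrightarrow\tau$ (which only flips the sign of $g=\varf_\vare(\sigma)-\varf_\vare(\tau)$) assume $s\geq 1$ and either $t=0$ or $i_1<j_1$. The central step is to single out the coordinate $r=s+t$. Setting
\[
A_r=(\sigma\setminus\tau)\cap[\lceil r/N\rceil,\infty),\quad B_r=(\tau\setminus\sigma)\cap[\lceil r/N\rceil,\infty),\quad s_r=|A_r|,\quad t_r=|B_r|,
\]
the $r$-th coordinate of $g$ is
\[
g_r=\sum_{m\in A_r}f_r(m)-\sum_{m\in B_r}f_r(m),
\]
because $f_r(m)$ contributes exactly when $Nm\geq r$. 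Since $s_r\leq s\leq r$ and $t_r\leq t\leq r$, the inductive branch-functional argument in the proof of Theorem~\ref{thm:Delta_k-into-ST_r} applies verbatim (it only uses the inequalities $s,t\leq r$, and nowhere does it use that $\sigma,\tau$ lie in a prescribed $\Delta_k$) and produces a branch functional $\beta$ in $T_r$ with $|\beta(g_r)|\geq s_r+t_r$.

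It remains to count the losses: every element of $\sigma\symdif\tau$ excluded from $A_r\cup B_r$ lies in $\{1,\dots,\lceil r/N\rceil-1\}$, so there are strictly fewer than $r/N=(s+t)/N$ of them, giving $s_r+t_r\geq(s+t)(1-1/N)$. Putting everything together,
\[
\|\varf_\vare(\sigma)-\varf_\vare(\tau)\|\geq\|g_r\|\geq|\beta(g_r)|\geq s_r+t_r\geq\frac{N-1}{N}\,d_\symdif(\sigma,\tau),
\]
so $\dist(\varf_\vare)\leq N/(N-1)\leq 1+\vare$, as required. The crux of the argument---and the main source of difficulty---is the cut-off $r\leq Nm$: its dependence on $m$ keeps $\varf_\vare(\sigma)$ finitely supported, hence inside the $\co$-direct sum representing $C_0([0,\omega^\omega))$, while a large value of $N$ guarantees that one tree $T_r$ with $r=s+t$ can accommodate essentially all of $\sigma\symdif\tau$ at once---exactly what the Schreier-style choice $N=1$ fails to provide outside $\cS_1$.
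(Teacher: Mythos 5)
Your proof is correct and is essentially the paper's own construction: the paper takes $f(m)=\sum_{k=1}^{N_m}f_k(m)$ with a cutoff sequence satisfying $2m\leq\vare N_m$, which is exactly your linear cutoff $r\leq Nm$, the only real difference being that the paper verifies the lower bound by truncating $\sigma,\tau$ to ``admissible'' sets and running a perturbation estimate, whereas you read off the single coordinate $r=s+t$ directly and count the at most $\lceil r/N\rceil-1$ lost elements --- a slightly cleaner bookkeeping of the same computation. One cosmetic point: the normalization ``$t=0$ or $i_1<j_1$'' must be imposed on $A_r,B_r$ (not on $\sigma\setminus\tau,\tau\setminus\sigma$), since the truncation can change which side contains the minimum; this costs nothing because the bound $\abs{\beta(g_r)}\geq s_r+t_r$ is symmetric under replacing $g_r$ by $-g_r$.
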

\begin{proof}
  As before, we will in fact embed into $C_0([0,\omega^\omega))$ which
  is identified with $S(T)\cong \big(\bigoplus_{k=1}^\infty
  S(T_k)\big)_{\co}$. Fix $\vare>0$. Choose a sequence
  $0=N_0<N_1<N_2<\dots$ of integers satisfying
  \begin{equation}
    \label{eq:admissibility}
    2m\leq \vare N_m\qquad\text{for all }m\geq 0\ .
  \end{equation}
  We next define maps $f, \varf$ similar to $f_\omega,\Delta_\omega$
  but with a different admissibility condition. It will be clear from
  the definition and the proof of
  Theorem~\ref{thm:schreier-into-C-omega} that this new map $\varf$
  will be an isometric embedding when restricted to the class of sets
  $\sigma$ with $\abs{\sigma}\leq N_{\min \sigma}$. We define
  \[
  f \colon \bn \to S(T)\ ,\qquad m \mapsto \sum_{k=1}^{N_m} f_k(m)\ ,
  \]
  and
  \[
  \varf \colon \Delta_\infty \to S(T)\ ,\qquad \sigma \mapsto
  \sum_{m\in \sigma} f(m)\ ,
  \]
  Fix $\sigma,\tau\in\Delta_\infty$. We will show that
  \begin{equation}
    \label{eq:ai}
    (1-\vare)d_\symdif(\sigma,\tau)\leq
    \norm{\varf(\sigma)-\varf(\tau)}\leq d_\symdif(\sigma,\tau)\ .
  \end{equation}
  By the triangle inequality, we have
  \[
  \norm{\varf(\sigma)-\varf(\tau)}=\Bignorm{\sum_{m\in
      \sigma}f(m)-\sum_{m\in \tau}f(m)}
  \leq \sum_{m\in \sigma\symdif \tau} \norm{f(m)}
  =d_\symdif(\sigma,\tau)\ .
  \]
  To show the lower bound, we first observe that $\sigma$ and $\tau$ can be
  assumed to be disjoint. Indeed, we have
  \[
  \varf(\sigma)-\varf(\tau)=\varf(\sigma\setminus \tau)-\varf(\tau\setminus
  \sigma)\quad\text{and}\quad
  d_\symdif(\sigma,\tau)=d_\symdif(\sigma\setminus \tau,\tau\setminus
  \sigma)\ ,
  \]
  and so we can replace $\sigma$ and $\tau$ with $\sigma\setminus \tau$ and
  $\tau\setminus \sigma$ if necessary.

  We next choose $m,n\in\bn$ such that
  \begin{equation}
    \label{eq:choice-of-m-n}
    N_{m-1}<\abs{\sigma}\leq N_m\qquad\text{and}\qquad
    N_{n-1}<\abs{\tau}\leq N_n\ .
  \end{equation}
  Set $\sigma'=\sigma\setminus \{1,\dots,m-1\}$ and $\tau'=\tau\setminus
  \{1,\dots,n-1\}$. Since $\sigma'$ and $\tau'$ are admissible, we have
  \begin{equation}
    \label{eq:equality-for-admissible}
    \norm{\varf(\sigma')-\varf(\tau')} = d_\symdif(\sigma',\tau')\ .
  \end{equation}
  Next, since $\sigma'$ and $\tau'$ are small perturbations of $\sigma$ and
  $\tau$, respectively, we have
  \begin{multline}
    \label{eq:perturb-phi}
    \bigabs{\norm{\varf(\sigma)-\varf(\tau)}-\norm{\varf(\sigma')-\varf(\tau')}}
    \leq
    \norm{\varf(\sigma)-\varf(\sigma')} + \norm{\varf(\tau)-\varf(\tau')}\\[2ex]
    \leq  d_\symdif(\sigma,\sigma')+d_\symdif(\tau,\tau') \leq (m-1)+(n-1)
  \end{multline}
  and
  \begin{equation}
    \label{eq:perturb-sets}
    \bigabs{d_\symdif(\sigma,\tau)-d_\symdif(\sigma',\tau')}\leq
    d_\symdif(\sigma,\sigma')+d_\symdif(\tau,\tau') \leq (m-1)+(n-1)\
    .
  \end{equation}
  It follows that
  \[
  \begin{array}{rcl@{\quad}l}
    \norm{\varf(\sigma)-\varf(\tau)} & \geq &
    \norm{\varf(\sigma')-\varf(\tau')} -
    (m+n-2) & (\text{by~\eqref{eq:perturb-phi}})\\[2ex]
    &=& d_\symdif(\sigma',\tau') - (m+n-2) &
    (\text{by~\eqref{eq:equality-for-admissible}})\\[2ex]
    &\geq& d_\symdif(\sigma,\tau) - 2(m+n-2) &
    (\text{by~\eqref{eq:perturb-sets}})\\[2ex]
    &=& \abs{\sigma}+\abs{\tau} - 2(m+n-2) &\\[2ex]
    &=& \abs{\sigma} \Big( 1-\frac{2(m-1)}{\abs{\sigma}}\Big)
    +\abs{\tau} \Big( 1-\frac{2(n-1)}{\abs{\tau}}\Big) &\\[2ex]
    &\geq & (1-\vare)(\abs{\sigma}+\abs{\tau}) =
    (1-\vare)\,d_\symdif(\sigma,\tau) &
    (\text{by~\eqref{eq:admissibility} and~\eqref{eq:choice-of-m-n}})
  \end{array}
  \]
  as required.
\end{proof}

\begin{rem}
  An interesting question presents itself in light of the two theorems
  above. Does $\Delta_\infty$ almost isometrically embed into $\cS_1$?
  A positive answer with Theorem~\ref{thm:schreier-into-C-omega} would
  provide another proof of  Theorem~\ref{thm:Delta-ai-into-C-omega}.
\end{rem}

\section{Estimating the $ C(K)$-distortion from
  below}\label{lowerbound}

\subsection{Aharoni's lower bound observed with ``metric lenses''}
Aharoni proved that $c_{\co}(\sepb)\geq 2$, and hence
$c_{\co}(\sepm)\geq 2$. Indeed, he showed that the separable Banach
space $\ell_1$ does not embed into $\co$ with distortion strictly less
than $2$. A careful inspection of his proof shows that the proof and
the statement of the result can be carried out and stated without
using or even mentioning the linear structure of the Banach space
$\ell_1$. This simple but crucial observation allows us to extend
Aharoni's proof to the much more general setting of embeddings into
$C(K)$-spaces.

Denote by $\Deltat_2$ the subset
$\big\{\emptyset,\{n\},\{1,i\},\{2,j\}:\, n\geq 1, i\geq 2 ,j\geq
3\big\}$ of the metric space $\Delta_{2}$. The following theorem is
nothing else but Aharoni's lower bound theorem reformulated in purely
metric terms. For the sake of completeness we include the original
proof using our notation in the hope that it will make the notation
used in the proof of Theorem~\ref{index} more accessible.
\begin{thm}[Aharoni]
  \label{Aharoni}
  The metric space $\Deltat_2$ does not embed into
  $\co$ with distortion strictly less than $2$.
\end{thm}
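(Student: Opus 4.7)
I would argue by contradiction. Suppose there is an embedding $f\colon\Deltat_2\to\co$ of distortion $D<2$. Translating by $-f(\emptyset)$ (permitted since $\co$ is a vector space) and rescaling, I may assume $f(\emptyset)=0$ and
\[
d_\symdif(x,y)\le\bignorm{f(x)-f(y)}_\infty\le D\,d_\symdif(x,y)\qquad (x,y\in\Deltat_2).
\]
Write $u=f(\{1\})$, $v=f(\{2\})$, $a_i=f(\{1,i\})$ for $i\ge 2$, and $b_j=f(\{2,j\})$ for $j\ge 3$; all of these vectors lie in $\co$, and $\norm{a_i}_\infty,\norm{b_j}_\infty\le 2D$ since $d_\symdif(\{1,i\},\emptyset)=d_\symdif(\{2,j\},\emptyset)=2$.

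The proof hinges on the distance $d_\symdif(\{1,i\},\{2,j\})=4$, valid whenever $i,j\ge 3$ and $i\ne j$. This forces $\bignorm{a_i-b_j}_\infty\ge 4$. Combined with $\norm{a_i}_\infty,\norm{b_j}_\infty\le 2D<4$, the coordinate $m=m(i,j)$ that witnesses the norm of $a_i-b_j$ must satisfy $\sgn\bigl(a_i(m)\bigr)=-\sgn\bigl(b_j(m)\bigr)$ and $|a_i(m)|,|b_j(m)|\ge 4-2D=:2\delta>0$ (otherwise the values at $m$ would be of the same sign and the triangle inequality would yield $|a_i(m)-b_j(m)|\le 2D<4$). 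Call this the \emph{cross constraint}. Complementing it, the \emph{branch constraints} $\bignorm{a_i-u}_\infty\ge 1$ and $\bignorm{b_j-v}_\infty\ge 1$ supply, for each $i$ (respectively $j$), a coordinate $k_i$ (resp.\ $\ell_j$) where $|a_i-u|$ (resp.\ $|b_j-v|$) exceeds~$1$.

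To extract useful structure I would apply a Ramsey-style reduction: passing to an infinite subsequence, and using that $u,v\in\co$ have only finitely many coordinates of a given size, I may assume that the indices $k_i$ are either all equal to a fixed integer $k^*$ or strictly increasing to infinity, and similarly for $\ell_j$; in every case I may also assume that the signs of $a_i(k_i)-u(k_i)$ and $b_j(\ell_j)-v(\ell_j)$ are constant, and (replacing $f$ by $-f$ if needed) positive. In the ``escape'' configuration, $u(k_i)\to 0$ forces $a_i(k_i)\ge 1-o(1)$, and analogously for $b_j$.

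The main obstacle is then to run the four resulting case combinations (common/common, common/escape, escape/common, escape/escape) against the cross constraint. In each case I would select indices $i\ne j$ from the surviving subsequences and compare the behavior of $a_i$ and $b_j$ at the coordinates $k_i$, $\ell_j$, and $m(i,j)$. The delicate case is escape/escape: here one pigeonholes once more on the coordinates $m(i,j)$---using that each $a_i$ lies in $\co$ to confine, for fixed $i$, the ``large'' coordinates of $a_i$ to a finite set---so that eventually $m(i,j)$ must coincide with $k_i$ or with $\ell_j$. At such a coincidence the branch and cross constraints demand that the same coordinate carry both a large positive value (from the branch extraction) and a large value of the opposite sign (from the cross constraint), giving an arithmetic incompatibility of the form $(1-o(1))+2\delta\le 2D$, i.e.\ $D\ge 2$, contradicting the hypothesis $D<2$.
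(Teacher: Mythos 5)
Your opening moves (normalizing $f(\emptyset)=0$ and using $d_\symdif(\{1,i\},\{2,j\})=4$ to produce a witness coordinate at which $f(\{1,i\})$ and $f(\{2,j\})$ take large values of opposite sign) are sound and match the start of the paper's argument, but the route you take from there has a genuine gap and does not close. First, the pivotal claim in your ``escape/escape'' case --- that after pigeonholing, the cross-witness coordinate $m(i,j)$ must eventually coincide with $k_i$ or $\ell_j$ --- is unjustified: confining $m(i,j)$ to the finite set of large coordinates of $a_i$ says nothing about its relation to $k_i$, which is merely \emph{one} coordinate where $\abs{a_i(k_i)-u(k_i)}\geq 1$; for instance $m(i,j)$ could be a fixed coordinate (say the first) for all $i,j$, unrelated to either $k_i$ or $\ell_j$. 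Second, even granting the coincidence, the arithmetic does not deliver the theorem: with $2\delta=4-2D$, the inequality $(1-o(1))+2\delta\leq 2D$ yields only $D\geq 5/4$, not $D\geq 2$. Third, the other three case combinations are asserted but not carried out, and it is doubtful they can be from the data you use: your argument never touches the singletons $\{n\}$ for $n\geq 3$, which are precisely the points from which the actual contradiction is extracted.

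The missing idea is to tie all the witness coordinates to a single \emph{fixed} finite set rather than to the varying coordinates $k_i,\ell_j$. Since $\norm{f(\{1,i\})-f(\{1\})}\leq C$ and $\norm{f(\{2,j\})-f(\{2\})}\leq C$, any coordinate $n_{i,j}$ at which $\abs{f(\{1,i\})(n_{i,j})-f(\{2,j\})(n_{i,j})}\geq 4$ satisfies $\abs{f(\{1\})(n_{i,j})-f(\{2\})(n_{i,j})}\geq 4-2C$, so $n_{i,j}$ lies in the finite set $\cX_{1,2}$ determined by the single pair $\{1\},\{2\}$. The same triangle inequality, applied with $\norm{f(\{1,i\})-f(\{i\})}\leq C$ and $\norm{f(\{2,j\})-f(\{j\})}\leq C$, gives $\abs{f(\{i\})(n_{i,j})-f(\{j\})(n_{i,j})}\geq 4-2C$. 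Hence the projections of the singleton images $f(\{n\})$, $n\geq 3$, onto the span of $(e_n)_{n\in\cX_{1,2}}$ form an infinite, $C$-bounded, $(4-2C)$-separated subset of a finite-dimensional space --- the desired contradiction. You should rebuild your argument around this compactness obstruction instead of the branch-coordinate bookkeeping.
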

\begin{proof}
  Assume that $f\colon \Deltat_2\to \co$ and $C<2$ satisfy
  \[
  d_\symdif(\sigma,\tau)\leq\norm{f(\sigma)-f(\tau)}\leq
  Cd_\symdif(\sigma,\tau)\qquad\text{for all }\sigma,\tau\in
  \Deltat_2\ .
  \]
  Without loss of generality one can assume that
  $f(\emptyset)=0$. Let $f_n=e^*_n\circ f$ so that
  $f(\sigma)=\big(f_n(\sigma)\big)_{n=1}^\infty$ for $\sigma\in
  \Deltat_2$. For every $i\neq j$ in $\bn$ define
  \[
  \cX_{i,j}=\{n\in\bn:\,\norm{f_n(\{i\})-f_n(\{j\})}\geq 4-2C\}\ .
  \]
  Note that these are finite sets. Moreover, for every $i,j\geq 3$,
  $i\neq j$, $\cX_{1,2}\cap \cX_{i,j}\neq \emptyset$. Indeed, we have
  \[
  \norm{f(\{1,i\})-f(\{2,j\})}\geq d_\symdif(\{1,i\},\{2,j\})=4\ .
  \]
  Hence there exists $n_{i,j}\in\bn$ such that
  \[
  \norm{f_{n_{i,j}}(\{1,i\})-f_{n_{i,j}}(\{2,j\})}\geq 4\ .
  \]
  It follows that
  \begin{align*}
    \abs{f_{n_{i,j}}(\{i\})-f_{n_{i,j}}(\{j\})} &\geq
    \abs{f_{n_{i,j}}(\{1,i\})-f_{n_{i,j}}(\{2,j\})}\\
    &\quad -
    \abs{f_{n_{i,j}}(\{1,i\})-f_{n_{i,j}}(\{i\})} -
    \abs{f_{n_{i,j}}(\{2,j\})-f_{n_{i,j}}(\{j\})}\\
    &\geq 4-\norm{f(\{1,i\})-f(\{i\})}-\norm{f(\{2,j\})-f(\{j\})}\\
    & \geq 4-Cd_\symdif(\{1,i\},\{i\})-Cd_\symdif(\{2,j\},\{j\})=4-2C\
    .
  \end{align*}
  This proves that $n_{i,j}\in \cX_{i,j}$. Arguing along the same
  lines, one gets that $n_{i,j}\in \cX_{1,2}$ as well. Therefore
  $\cX_{1,2}\cap \cX_{i,j}\neq\emptyset$ whenever $i\neq j$, $i,j\geq
  3$. Denote by $P$ the canonical projection from $\co$ onto the
  closed linear span $Y$ of the vectors $(e_n)_{n\in \cX_{1,2}}$. We
  now obtain a contradiction by observing that the
  sequence $\left(P f(\{n\})\right)_{n=3}^{\infty}$ is a $C$-bounded and
  $(4-2C)$-separated sequence in the finite-dimensional Banach space
  $Y$. Indeed, for every $n\geq 3$,
  \begin{align*}
    \norm{P f(\{n\})}\leq
    \norm{f(\{n\})}=\norm{f(\{n\})-f(\emptyset)}\leq
    Cd_\symdif(\{n\},\emptyset)=C\ ,
  \end{align*}
  and for every $i\neq j$, $i,j\geq 3$, we have
  \begin{align*}
    \norm{P f(\{i\})-P f(\{j\})}& =\sup_{n\in
    \cX_{1,2}}\abs{f_n(\{i\})-f_n(\{j\})}\\
    &\geq \abs{f_{n_{i,j}}(\{i\})-f_{n_{i,j}}(\{j\})}\\
    & \geq 4-2C>0\ .
  \end{align*}
\end{proof}

\subsection{Estimating the $C(K)$-distortion of $\Delta_k$ from below}
A key ingredient in estimating from below the $C(K)$-distortion of the
metric space $\Delta_k$ is the Cantor-Bendixson derivation for
compact spaces. We next recall the definition and a few basic
properties of this derivation.

Let $K$ be a compact topological space. The
\emph{Cantor-Bendixson derivative }$K'$ of $K$ is the set of all
accumulation points of $K$, \ie
\[
K'=K\setminus\{x\in K:\, x\text{ is an isolated point} \}\ .
\]
By transfinite induction one can define derivatives $K^{(\alpha)}$ of
higher order $\alpha$ as follows. We set $K^{(0)}=K$. For an ordinal
$\alpha$ we let $K^{(\alpha+1)}=\big(K^{(\alpha)}\big)'$ and, finally,
for a non-zero limit ordinal $\lambda$ we define
$K^{(\lambda)}=\bigcap_{\alpha<\lambda}K^{(\alpha)}$.

We gather in the next proposition some basic properties of the
Cantor-Bendixson derivation.
\begin{prop}
  \label{prop:cb-properties}
  Let $K$ be a compact metric space. Then
  \begin{enumerate}
  \item $K$ is finite $\Longleftrightarrow$ $K'=\emptyset$
    $\Longleftrightarrow$ $K$ is discrete;
  \item $K$ is countable $\Longleftrightarrow$ $\E\ \alpha<\omega_1$
    such that $K^{(\alpha)}=\emptyset$;
  \item $K$ is uncountable $\Longleftrightarrow$ $\exists\
    \alpha<\omega_1$ such that
    $K^{(\alpha+1)}=K^{(\alpha)}\neq\emptyset$.
  \end{enumerate}
\end{prop}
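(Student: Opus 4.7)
The plan is to handle the three equivalences in sequence, since (3) will follow quickly from (2) together with the Cantor-Bendixson stabilization theorem. Part (1) is essentially topology folklore: if $K$ is finite then every point is trivially isolated, so $K'=\emptyset$ and $K$ is discrete; conversely $K'=\emptyset$ means every point of $K$ is isolated, which is the definition of discreteness; and a discrete compact space is finite because the cover by singletons must admit a finite subcover.

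For the harder direction $(\Leftarrow)$ of (2), I would prove by transfinite induction on $\alpha$ that $K\setminus K^{(\alpha)}$ is countable; taking $\alpha$ to be the given ordinal with $K^{(\alpha)}=\emptyset$ then yields countability of $K$. The successor step rests on the observation that in any second countable (hence in any compact metric) space the set of isolated points is countable, since each isolated point is captured by a distinct member of a countable basis; applied to $K^{(\alpha)}$, this shows $K^{(\alpha)}\setminus K^{(\alpha+1)}$ is countable. The limit step is immediate from the induction hypothesis because $K\setminus K^{(\lambda)}=\bigcup_{\alpha<\lambda}(K\setminus K^{(\alpha)})$.

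For $(\Rightarrow)$ of (2) and for (3) I would first establish the Cantor-Bendixson stabilization lemma: for every compact metric $K$ there is $\alpha<\omega_1$ with $K^{(\alpha+1)}=K^{(\alpha)}$. Fix a countable basis $\{U_n\}_{n\in\bn}$ of $K$. For each ordinal $\alpha$ satisfying $K^{(\alpha+1)}\subsetneq K^{(\alpha)}$, pick a point $x_\alpha\in K^{(\alpha)}\setminus K^{(\alpha+1)}$ and an index $n(\alpha)$ with $U_{n(\alpha)}\cap K^{(\alpha)}=\{x_\alpha\}$. If $\alpha<\beta$ both witness strict decrease, then $n(\alpha)\ne n(\beta)$: otherwise $x_\beta\in K^{(\beta)}\subseteq K^{(\alpha+1)}$ would lie in $U_{n(\alpha)}\cap K^{(\alpha)}=\{x_\alpha\}$, forcing $x_\alpha=x_\beta\in K^{(\alpha+1)}$, a contradiction. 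So the set of strict-decrease ordinals injects into $\bn$, and the chain stabilizes at some $\alpha<\omega_1$. Then $K$ is countable iff this stabilized set $P=K^{(\alpha)}$ is empty, because a non-empty perfect compact metric space is uncountable: by the standard dyadic construction one chooses at each node of $\{0,1\}^{<\omega}$ two distinct points of $P$ and two disjoint closed balls of rapidly shrinking radii about them lying inside $P$'s closure, and the nested intersection yields a continuous injection $\{0,1\}^\bn\hookrightarrow P$. This simultaneously proves $(\Rightarrow)$ of (2) (if $K$ is countable, $P$ must be empty) and both directions of (3) (uncountability is equivalent to $P\ne\emptyset$, i.e.\ to the existence of $\alpha<\omega_1$ with $K^{(\alpha+1)}=K^{(\alpha)}\ne\emptyset$).

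The only substantive obstacle is the perfect-set-contains-a-Cantor-set construction; the stabilization argument and the countability of isolated points in second countable spaces are routine bookkeeping with a countable basis. Since these three items together are precisely the classical Cantor-Bendixson theorem, a pithy write-up could simply invoke that theorem, but I would include the basis-indexing argument explicitly because it is short and explains the appearance of $\omega_1$ as the critical cutoff.
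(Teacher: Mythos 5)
Your proof is correct. Note, however, that the paper itself offers no proof of this proposition: it is stated as a collection of ``basic properties'' of the Cantor--Bendixson derivation, i.e.\ the classical Cantor--Bendixson theorem is being quoted as known. What you have written is precisely the standard proof of that theorem, and all the pieces check out: part (1) is routine compactness; the transfinite induction showing $K\setminus K^{(\alpha)}$ is countable (using that isolated points of a second countable space are captured injectively by basic open sets) gives the reverse direction of (2); and the stabilization argument is sound, since the ordinals $\alpha$ with $K^{(\alpha+1)}\subsetneq K^{(\alpha)}$ form a downward-closed set that injects into a countable basis, hence stabilization occurs at some $\alpha<\omega_1$, after which the dichotomy ``the stabilized set $P$ is empty versus $P$ is a nonempty perfect set, hence uncountable'' delivers the forward direction of (2) and both directions of (3) at once. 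The only component requiring real work is the embedding of the Cantor set into a nonempty compact perfect metric space, which you correctly identify and sketch. So the proposal is a complete, self-contained substitute for the citation-by-silence in the paper; as you note yourself, one could equally well just invoke the Cantor--Bendixson theorem.
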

For a general compact topological space $K$ the smallest ordinal
$\alpha$ such that $K^{(\alpha)}=K^{(\alpha+1)}$ is called the
\emph{Cantor-Bendixson index }(or \emph{rank}) of $K$, and we denote
it by $\cbi(K)$. For example, consider the compact space
$K=[0,\omega^\alpha\cdot n]$, where $1\leq \alpha<\omega_1$ and $1\leq
n<\omega$. Then $\cbi(K)=\alpha+1$ and $\abs{K^{(\alpha)}}=n$. More
generally, if $K$ is a countably infinite compact metric space, then
for some $1\leq\alpha<\omega_1$ and $1\leq n<\omega$ we have
$\cbi(K)=\alpha+1$, $\abs{K^{(\alpha)}}=n$ and $K$ is homeomorphic to
$[0,\omega^\alpha\cdot n]$. Thus, the Cantor-Bendixson derivation
gives rise to a topological classification of countable compact metric
spaces, and hence an isometric classification of $C(K)$-spaces with
separable dual.

Inspired by the reformulation of Aharoni's proof in terms of a metric
subset of $\Delta_{2}$ we establish a link between the
$C(K)$-distortion of the sequence $(\Delta_k)_{k\geq 1}$ and the
Cantor-Bendixson index of the compact space $K$. In
Section~\ref{upperbound} we showed that
$c_{C([0,\omega^{k-1}])}(\Delta_k)\leq \frac{k}{k-1}$ for $k\geq
2$. In the remainder of this section we will show that the upper bound
is tight.
\begin{thm}
  \label{index}
  Let $K$ be a compact topological space and $k$ be an integer with
  $k\geq 2$. If $\Delta_k$ admits a bi-Lipschitz embedding into $C(K)$
  with distortion strictly less than $\frac{k}{k-1}$, then
  $\cbi(K)\geq k+1$. It follows that
  $c_{C([0,\omega^{k-1}])}(\Delta_k)=\frac{k}{k-1}$.
\end{thm}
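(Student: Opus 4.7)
The plan is to argue by contradiction, extending the proof of Theorem~\ref{Aharoni} by iterating its core extraction $k-1$ times, once per step of the Cantor--Bendixson derivation. Suppose $f\colon\Delta_k\to C(K)$ is a bi-Lipschitz embedding with $f(\emptyset)=0$ and $\dist(f)=C<k/(k-1)$; I want to derive a contradiction from the assumption $K^{(k)}=\emptyset$. Once this is done, the equality $c_{C([0,\omega^{k-1}])}(\Delta_k)=k/(k-1)$ follows by combining the lower bound with the matching upper bound from Theorem~\ref{thm:Delta_k-into-ST_r} and the fact that $\cbi([0,\omega^{k-1}])=k$.

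The key quantitative input is a peeling estimate coming from the triangle inequality. For any two disjoint $k$-subsets $\sigma=\{i_1,\dots,i_k\}$ and $\tau=\{j_1,\dots,j_k\}$ of $\bn$, we have $d_\symdif(\sigma,\tau)=2k$, so there exists a witness $t\in K$ with $|f(\sigma)(t)-f(\tau)(t)|\geq 2k$. Since $\|f(\sigma)-f(\sigma')\|\leq C(k-s)$ whenever $\sigma'\subset\sigma$ has size $s$ (and analogously for $\tau$), at such a $t$ one obtains $|f(\sigma')(t)-f(\tau')(t)|\geq 2k-2C(k-s)$ for every pair of $s$-subsets $\sigma'\subset\sigma$, $\tau'\subset\tau$. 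Specialising to $s=1$ gives $|f(\{i_r\})(t)-f(\{j_l\})(t)|\geq 2k-2C(k-1)=:\alpha$, and the hypothesis $C<k/(k-1)$ is precisely what makes $\alpha>0$, providing the strict positivity that will drive the iterative extraction.

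The heart of the proof is then an iterative construction generalising Aharoni's single set $\cX_{1,2}$: one builds a descending chain of compact subsets $W_1\supset W_2\supset\cdots\supset W_{k-1}$ of $K$ together with accumulation points $t_r\in W_r\cap K^{(r)}$. At level $r$, $W_r$ is the set of points in $K$ at which some distinguished pair of $r$-subsets has value-difference exceeding a threshold calibrated from $\alpha$ and the intermediate peeling estimates. By letting the configuration at size $r+1$ vary through infinitely many parallel copies (for instance, sending the maximal indices of $\sigma,\tau$ to infinity while keeping the ambient $r$-subset structure fixed), compactness of $W_r$ and the size-$(r+1)$ separation force the corresponding witnesses to accumulate at a point that additionally lies in $K^{(r+1)}$. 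Continuity of the finitely many functions $f(\sigma'),f(\tau')$ participating at that stage transfers the value-differences to the limit, defining $t_{r+1}$ and hence cutting out $W_{r+1}$.

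After $k-1$ iterations one arrives at a configuration forcing a contradiction with $K^{(k)}=\emptyset$: an infinite family of configurations which, by one more application of the accumulation principle inside $W_{k-1}\cap K^{(k-1)}$ and by the peeling identity, would produce a point in $K^{(k)}$. The principal difficulty is implementing the iterative extraction cleanly---at each level one must diagonalise simultaneously over configurations and their witnesses, ensuring both that the accumulation lies at the next Cantor--Bendixson level and that the peeled value-differences survive the passage to the limit with strict positivity preserved. The base case $k=2$ of this iteration is essentially Theorem~\ref{Aharoni}, adjusted only for the fact that in the general setting with $\cbi(K)\leq 2$ the compact set $\cX_{1,2}$ need not itself be finite but does have all its accumulation points in the finite set $K'$, so that a single additional accumulation step reproduces the finite-dimensional contradiction of Aharoni's original argument.
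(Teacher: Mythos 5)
Your overall strategy --- iterate Aharoni's extraction once per Cantor--Bendixson derivative, powered by the peeling estimate $|f(\{i_r\})(t)-f(\{j_r\})(t)|\ge 2k-2C(k-1)>0$ --- is the right one, and it is essentially the approach the paper takes. But the induction is not set up in a way that can close, and the failure occurs precisely at the step you flag as the ``principal difficulty.'' To promote a point of $K^{(r)}$ to a point of $K^{(r+1)}$ you need an \emph{infinite closed subset of} $K^{(r)}$, whose accumulation point then lies in $(K^{(r)})'=K^{(r+1)}$. In your scheme the witnesses produced by the ``infinitely many parallel copies'' live in $W_r\subset K$, and an infinite set of points of $K$ accumulating somewhere only yields a point of $K'$, no matter how many levels you have already climbed; the single distinguished point $t_r\in W_r\cap K^{(r)}$ per level cannot repair this, since one point does not accumulate. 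As written, the accumulation step therefore produces $K'\neq\emptyset$ at every stage rather than marching up the derivatives.

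The fix is to strengthen the induction hypothesis so that the \emph{entire} witness set, not just one point, sits inside $K^{(s)}$, and to quantify it over all choices of indices. Concretely (this is the paper's argument): with $\eta=2k-2(k-1)D$ and $\cX_{i,j}=\{\beta\in K:\ |f(\{i\})(\beta)-f(\{j\})(\beta)|\ge\eta\}$, one proves by induction on $s$ that for \emph{every} choice of $2(k-s)$ distinct integers the closed set $K^{(s)}\cap\cX_{i_1,j_1}\cap\dots\cap\cX_{i_{k-s},j_{k-s}}$ is nonempty. The universal quantifier is what makes the step go: fixing $k-s-1$ pairs and letting one extra pair $(i,j)$ range over all $i,j>n_0$, the level-$s$ hypothesis gives, for each such pair, a point of $L=K^{(s)}\cap\bigcap_m\cX_{i_m,j_m}$ at which $f(\{i\})$ and $f(\{j\})$ differ by at least $\eta$; hence the restrictions of $f(\{i\})$, $i>n_0$, to $L$ form a bounded $\eta$-separated family in $C(L)$, so $L$ is infinite and, being a closed subset of $K^{(s)}$, has an accumulation point in $K^{(s+1)}\cap\bigcap_m\cX_{i_m,j_m}$. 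Note that only the singleton sets $\cX_{i,j}$ and the single fixed threshold $\eta$ are needed; your ``distinguished pairs of $r$-subsets'' with thresholds degrading through the intermediate peeling estimates are unnecessary and make the limit transfer genuinely harder, since you would need to preserve separation for infinitely many configurations simultaneously instead of applying continuity to finitely many functions. At $s=k$ the statement reads $K^{(k)}\neq\emptyset$, i.e.\ $\cbi(K)\ge k+1$; your derivation of the equality $c_{C([0,\omega^{k-1}])}(\Delta_k)=\frac{k}{k-1}$ from Theorem~\ref{thm:Delta_k-into-ST_r} together with $\cbi([0,\omega^{k-1}])=k$ is correct.
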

\begin{proof}
  Assume that there is a function  $f\colon \Delta_k\to  C(K)$ and
  a constant $D<\frac{k}{k-1}$ such that
  \[
  d_\symdif(\sigma,\tau)\leq \norm{f(\sigma)-f(\tau)} \leq
  Dd_\symdif(\sigma,\tau)\qquad\text{for all } \sigma, \tau\in
  \Delta_k .
  \]
  Set $\eta=2k-2(k-1)D$ and observe that $\eta>0$. For distinct
  $i,j\in\bn$ define
  \[
  \cX_{i,j}=\big\{ \beta\in K:\,
  \abs{f(\{i\})(\beta)-f(\{j\})(\beta)}\geq \eta\big\}\ .
  \]
  Consider the following statment. For each $0\leq s\leq k$ and for
  any $2(k-s)$ distinct integers $i_1,i_2,\dots, i_{k-s}, j_1,
  j_2,\dots,j_{k-s}$, we have
  \[
  K^{(s)}\cap \cX_{i_1,j_1}\cap\cX_{i_2,j_2}\cap\dots\cap
  \cX_{i_{k-s},j_{k-s}}\neq\emptyset\ .
  \]
  We will now verify this statement by induction on $s$. The theorem
  will then follow by putting $s=k$.

  We begin with $s=0$. Let $i_1,\dots,i_k$ and $j_1,\dots,j_k$ be $2k$
  distinct elements of $\bn$. Set $\sigma=\{i_1,\dots,i_k\}$ and
  $\tau=\{j_1,\dots,j_k\}$. Since $\norm{f(\sigma)-f(\tau)}\geq
  d_{\symdif}(\sigma,\tau)= 2k$, there exists $\beta\in K$ such that
  $\abs{f(\sigma)(\beta)-f(\tau)(\beta)}\geq 2k$. It follows that
  \begin{align*}
    \bigabs{f(\{i_r\})(\beta)-f(\{j_r\})(\beta)} & \geq
    \bigabs{f(\sigma)(\beta)-f(\tau)(\beta)} -
    \bigabs{f(\sigma)(\beta)-f(\{i_r\})(\beta)} \\
    & \quad - \bigabs{f(\{j_r\})(\beta)-f(\tau)(\beta)}\\
    & \geq 2k - \norm{f(\sigma)-f(\{i_r\})} -
    \norm{f(\{j_r\})-f(\tau)}\\
    & \geq 2k - Dd_{\symdif}(\sigma,\{i_r\}) -
    Dd_{\symdif}(\{j_r\},\tau)\\
    &\geq 2k-2D(k-1)=\eta>0
  \end{align*}
  for each $1\leq r\leq k$. Thus, $\beta\in \cX_{i_1,j_1}\cap\dots\cap
  \cX_{i_k,j_k}$.

  Now assume that the statement holds for some $0\leq s<k$. Let
  $i_1,\dots, i_{k-s-1}$ and $j_1,\dots,j_{k-s-1}$ be $2(k-s-1)$ distinct
  elements of $\bn$. Let
  \[
  L= K^{(s)}\cap \cX_{i_1,j_1}\cap\dots\cap
  \cX_{i_{k-s-1},j_{k-s-1}}
  \]
  and $n_0=\max \{i_1,\dots,i_{k-s-1},j_1,\dots,j_{k-s-1}\}$. Note that $L$ is a
  closed subset of $K$. Let us denote by $R$ the restriction operator
  $C(K)\to C(L)$. Note that for distinct $i,j>n_0$ we have $L\cap
  \cX_{i,j}\neq\emptyset$ by the induction hypothesis. It follows that
  the functions $Rf(\{i\})$, $i>n_0$, are uniformly bounded
  and $\eta$-separated. Indeed, we have
  \begin{multline*}
    \norm{Rf(\{i\})}\leq \norm{f(\{i\})} \leq
    \norm{f(\{i\})-f(\emptyset)}+\norm{f(\emptyset)} \leq D
    d_{\symdif}(\{i\},\emptyset) + \norm{f(\emptyset)}\\ \leq D
    +\norm{f(\emptyset)}\ ,
  \end{multline*}
  and for distinct $i,j>n_0$ we can pick $\beta\in L\cap\cX_{i,j}$ and
  obtain
  \[
  \bignorm{Rf(\{i\})-Rf(\{j\})} \geq
  \bigabs{f(\{i\})(\beta)-f(\{j\})(\beta)} \geq  \eta\ .
  \]
  We deduce that $C(L)$ must be infinite-dimensional, and hence $L$
  must be infinite. Since every infinite compact space has an
  accumulation point, we have
  \[
  K^{(s+1)}\cap \cX_{i_1,j_1}\cap\cX_{i_2,j_2}\cap\dots\cap
  \cX_{i_{k-s-1},j_{k-s-1}}\neq\emptyset\ ,
  \]
  as required.
\end{proof}

\section{Applications and open problems}
\label{applications}

Let $(M,d_M)$ denote an arbitrary separable metric space. Let
$D_\alpha$ be an upper bound on $c_{C([0,\omega^\alpha])}(M)$, and
consider the following self-explanatory diagram.
\begin{tiny}
  \[
  \begin{array}{ccccccccc}
    \co&\isometric&C([0,\omega])&\isometric\cdots\isometric&C([0,\omega^k])&\isometric\cdots\isometric&C([0,\omega^\omega])&\isometric &C([0,1])\\
    \uptwolip& &\updonelip & & \updklip & & \updwlip&
    &\uponelip\\
    (M,d_M)      & & (M,d_M)      & \cdots  &   (M,d_M)    & \cdots& (M,d_M)     & &(M,d_M)\\
  \end{array}
  \]
\end{tiny}
Whereas the best distortion achievable in the two extreme cases is
completely understood, essentially no estimates for the values of the
parameters $c_{C([0,\omega^\alpha])}(\cC)$ have been hitherto known for
$\cC$ being any class among $\sepm, \sepb, \cotype, \type, \sepref$
(besides the upper bound $2$ which follows from
Kalton-Lancien embedding result~\cite{KaltonLancien2008}). It is worth
noting that since any $C(K)$-space (for $K$ countable) is
$\co$-saturated, it cannot be a \emph{linearly }isometrically
universal space for the class of separable Banach spaces. Moreover, it
cannot be an isometrically universal space either since Godefroy and
Kalton~\cite{GodefroyKalton2003} proved that if a separable Banach
space $X$ embeds isometrically into a Banach space $Y$, then $Y$
contains an linear isometric copy of $X$. Our study of stratifications
of the Hamming cube (Theorem~\ref{index}) yields nontrivial lower
bounds for the first time.
\begin{cor}
  \label{cklower}
  Let $\cC\in\{\sepm, \sepb, \cotype, \type,\sepref\}$, and let
  $k\in\bn$. Then $\frac{k+1}{k}\leq c_{C([0,\omega^k])}(\cC)\leq 2$.
\end{cor}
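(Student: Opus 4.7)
The plan is to derive the lower bound from Theorem \ref{index} applied to $K=[0,\omega^k]$, and to obtain the upper bound from the Kalton--Lancien embedding into $\co$. Since $\cbi([0,\omega^k])=k+1$, Theorem \ref{index} with $k+1$ in place of $k$ immediately gives
\[
c_{C([0,\omega^k])}(\Delta_{k+1})\geq \frac{k+1}{k}\ .
\]
For each class $\cC$ in the corollary it then suffices to exhibit a witness $X\in\cC$ into which $\Delta_{k+1}$ embeds isometrically or almost isometrically, so that this bound propagates to $c_{C([0,\omega^k])}(X)$.

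For $\cC=\sepm$ the witness is $\Delta_{k+1}$ itself. For $\cC=\sepb$ the equality $c_N(\sepb)=c_N(\sepm)$ recorded in the introduction suffices. For $\cC=\cotype$ I take $X=\ell_1$: the indicator map $\sigma\mapsto\bi_\sigma$ is an isometric embedding of $\Delta_{k+1}$ into $\ell_1$, and $\ell_1$ has cotype~$2$; precomposition of any embedding $f\colon\ell_1\to C([0,\omega^k])$ of distortion $D$ with this map yields an embedding of $\Delta_{k+1}$ of distortion at most $D$, forcing $D\geq \frac{k+1}{k}$.

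For $\cC=\type$ and $\cC=\sepref$ the space $\ell_1$ is not in the class, and no isometric copy of $\Delta_{k+1}$ need lie in either one (the four-point subset $\{\emptyset,\{1\},\{2\},\{1,2\}\}$ of $\Delta_2$ already fails to embed isometrically into any Hilbert space). Instead I would use $X=\ell_p$ for $p>1$ close to $1$; each such $\ell_p$ is separable, reflexive, and has type $\min(p,2)>1$. A direct calculation shows that the same indicator map embeds $\Delta_{k+1}$ into $\ell_p$ with distortion exactly $(2(k+1))^{1-1/p}$: the Lipschitz constant equals $1$, attained on pairs with $|\sigma\symdif\tau|=1$, and the inverse Lipschitz constant equals $(2(k+1))^{1-1/p}$, attained on disjoint $(k+1)$-subsets of $\bn$. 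Given $\vare>0$, pick $p>1$ so that $(2(k+1))^{1-1/p}\leq 1+\vare$; any embedding $f\colon\ell_p\to C([0,\omega^k])$ of distortion $D$ then yields an embedding of $\Delta_{k+1}$ of distortion at most $D(1+\vare)$, so $D\geq \frac{k+1}{k(1+\vare)}$. Letting $\vare\to 0$ gives $c_{C([0,\omega^k])}(\type),\,c_{C([0,\omega^k])}(\sepref)\geq \frac{k+1}{k}$.

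The upper bound $\leq 2$ is uniform over all five classes, since each is contained in $\sepm$: by Kalton and Lancien~\cite{KaltonLancien2008} every separable metric space embeds into $\co$ with distortion at most $2$, $\co$ embeds linearly isometrically into $C([0,\omega])$, and $C([0,\omega])$ embeds linearly isometrically into $C([0,\omega^k])$ via the clopen-inclusion principle from the introduction. The substantive content thus sits entirely in Theorem \ref{index}; the main point of care is the $p\to 1^+$ approximation argument for $\type$ and $\sepref$, where the class contains no isometric copy of $\Delta_{k+1}$ and one must pass to the limit through $\ell_p$ with $p$ close to $1$.
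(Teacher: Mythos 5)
Your proof is correct and follows essentially the same route as the paper: the lower bound comes from Theorem~\ref{index} applied to $\Delta_{k+1}$ with $\cbi([0,\omega^k])=k+1$, propagated to each class via the indicator embedding into $\ell_p$ with $p\to 1^+$ (the paper uses $\ell_p$ uniformly for all five classes, whereas you split off $\sepm$, $\sepb$, $\cotype$ first, and you also handle $k=1$ via Theorem~\ref{index} rather than citing Aharoni; both are cosmetic differences). The upper bound via Kalton--Lancien and the isometric inclusions $\co\subset C([0,\omega])\subset C([0,\omega^k])$ matches the paper as well.
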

\begin{proof}
  We first remark that the upper bound for all $k$ is the result of
  Kalton and Lancien~\cite{KaltonLancien2008}, and the lower bound for
  $k=1$ is due to Aharoni~\cite{Aharoni1974}. We now consider the
  lower bound for $k\geq 2$.

  Set $K=[0,\omega^k]$, and note that $\cbi(K)=k+1$. It follows from
  Theorem~\ref{index} that $c_{C(K)}(\Delta_{k+1})\geq \frac{k+1}{k}$.
  Given $\vare>0$, choose $p$ with $1<p<\infty$ such that the function
  $f\colon \Delta_{k+1}\to \ell_p$ defined by $f(\sigma)=\sum_{i\in
    \sigma} e_i$ is a $(1+\vare)$-isometric embedding. It follows that
  $c_{C(K)}(\Delta_k)\leq (1+\vare)c_{C(K)}(\ell_p)$. Since $\ell_p$
  belongs to the class $\cC$, we have $c_{C(K)}(\Delta_k)\leq
  (1+\vare)c_{C(K)}(\cC)$, and the result is proved.
\end{proof}
The following corollary is an easy consequence of Theorem~\ref{index}
and the fact that $(\Delta_k)_{k\geq 1}$ is a stratification of
$H_\infty$.
\begin{cor}
  \label{ai}
  Let $K$ be a countable compact metric space.
  If $H_\infty\aiembed C(K)$, then $\cbi(K)\geq \omega+1$. In
  particular, if $C(K)$ is an almost isometrically universal space for
  the class $\cC\in\{\sepm, \sepb, \cotype, \type, \sepref\}$, then
  $\cbi(K)\geq \omega+1$.
\end{cor}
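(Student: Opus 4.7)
The plan is to apply Theorem~\ref{index} to each segment $\Delta_k$ of the stratification $H_\infty=\bigcup_{k}\Delta_k$, obtaining lower bounds on $\cbi(K)$ that grow with~$k$, and then to upgrade from $\cbi(K)\geq\omega$ to $\cbi(K)\geq\omega+1$ by a Baire category argument specific to countable compact metric spaces.

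For the first statement, suppose $H_\infty\aiembed C(K)$. Fix $k\geq 2$ and pick $\vare>0$ with $1+\vare<\frac{k}{k-1}$; restricting a $(1+\vare)$-bi-Lipschitz embedding of $H_\infty$ to the subset $\Delta_k$ and invoking Theorem~\ref{index} yields $\cbi(K)\geq k+1$. Since $k$ was arbitrary, $\cbi(K)\geq\omega$. To conclude $\cbi(K)\geq\omega+1$, I would use that $K$ is a countable compact metric space: each $K^{(n)}$ is nonempty (otherwise $\cbi(K)\leq n<\omega$), so the finite intersection property forces $K^{(\omega)}=\bigcap_{n}K^{(n)}$ to be nonempty. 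But $K^{(\omega)}$ is a countable nonempty compact metric space, and Baire category guarantees that such a space contains an isolated point; hence $K^{(\omega+1)}\subsetneq K^{(\omega)}$, and therefore $\cbi(K)>\omega$.

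For the \emph{in particular} clause, suppose $C(K)$ is almost isometrically universal for $\cC$. If $\cC=\sepm$ then $H_\infty\in\sepm$ and the first assertion applies directly. For the other four classes I would argue one $k$ at a time: given $k\geq 2$ and $\delta>0$, choose $p\in(1,\infty)$ so that $(2k)^{1-1/p}\leq 1+\delta$. The map $\sigma\mapsto\sum_{i\in\sigma}e_i$ sends $\Delta_k$ into $\ell_p$ with distortion $(2k)^{1-1/p}$, and $\ell_p$ lies in each of $\sepb,\cotype,\type,\sepref$; composing with an almost isometric embedding $\ell_p\hookrightarrow C(K)$ furnished by universality produces an embedding $\Delta_k\hookrightarrow C(K)$ of distortion arbitrarily close to~$1$, in particular strictly less than $\frac{k}{k-1}$. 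Theorem~\ref{index} then gives $\cbi(K)\geq k+1$ for every $k$, and the Baire step above upgrades this to $\cbi(K)\geq\omega+1$.

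The only genuinely non-routine point is the Baire-category argument that rules out $\cbi(K)=\omega$; the rest is a direct combination of Theorem~\ref{index} with the standard $\ell_p$-embedding of the finite segments $\Delta_k$.
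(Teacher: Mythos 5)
Your proposal is correct and follows essentially the same route as the paper: apply Theorem~\ref{index} to each segment $\Delta_k$ to get $\cbi(K)\geq k+1$ for all $k$, deduce $K^{(\omega)}\neq\emptyset$ by compactness, and then use countability of $K$ to rule out $K^{(\omega)}=K^{(\omega+1)}$ (the paper cites Proposition~\ref{prop:cb-properties} where you inline the Baire-category argument, and your $\ell_p$ reduction for the Banach-space classes is exactly the device used in Corollary~\ref{cklower}).
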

\begin{proof}
  It follows from Theorem~\ref{index} that $\cbi(K)\geq k+1$ for every
  $k<\omega$, and hence
  $K^{(\omega)}=\bigcap_{k<\omega} K^{(k)}\neq\emptyset$. The result
  follows by Proposition~\ref{prop:cb-properties}.
\end{proof}
\begin{rem}
  Prochazka and S\'anchez-Gonz\'alez~\cite{PSG} using the technique of
  Section~\ref{lowerbound} exhibited a countable nonproper metric space
  which does not admit an embedding with distortion less than $2$ into
  any $C(K)$-space with $K$ countable. Therefore for such compact spaces
  $K$ we have $c_{C(K)}(\sepm)=c_{C(K)}(\sepb)=2$, and hence $C(K)$
  cannot be an almost isometrically universal space for the  classes
  $\sepm$ or $\sepb$.
\end{rem}

The following theorem, of independent interest, can also be used to
prove the second part of Corollary~\ref{ai} in combination with either
Aharoni's original lower bound involving $\ell_1$ or
Corollary~\ref{cklower}.
\begin{thm}
  If $\ell_1\aiembed  C(K)$ then $\ell_1\aiembed  C(K^{(\alpha)})$ for
  all ordinals $\alpha<\omega$.
\end{thm}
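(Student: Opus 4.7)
Since $\alpha<\omega$ is a finite ordinal, induction on $\alpha$ reduces the statement to the single step $\ell_1\aiembed C(K)\Rightarrow \ell_1\aiembed C(K')$; iterating this step $\alpha$ times then yields the conclusion. Fix $\varepsilon>0$ and let $\delta=\delta(\varepsilon)>0$ be a parameter to be chosen. By hypothesis, pick a Lipschitz embedding $f\colon \ell_1\to C(K)$ with $\dist(f)\leq 1+\delta$. The natural candidate for a map into $C(K')$ is $g=R\circ f\colon \ell_1\to C(K')$, where $R\colon C(K)\to C(K')$ is the restriction operator; the kernel of $R$ is isometric to $\co(U)$, with $U=K\setminus K'$ the open discrete set of isolated points. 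The map $g$ is automatically $(1+\delta)$-Lipschitz, so only the lower Lipschitz estimate $\|g(x)-g(y)\|\geq (1+\varepsilon)^{-1}\|x-y\|$ is at issue, and this can fail exactly when $\|f(x)-f(y)\|_{C(K)}$ is attained on $U$ rather than on $K'$.

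The plan is to exploit the self-similarity of $\ell_1$ together with the fact that $\co(U)$ cannot almost isometrically accommodate $\ell_1$ (Aharoni's bound $c_{\co}(\ell_1)\geq 2$). Since every block subspace of $\ell_1$ is again isometric to $\ell_1$, it suffices to find an infinite $A\subset\bn$ so that on $\cspn\{e_n:n\in A\}$ the composition $g$ is almost isometric. Assuming $K$ is compact metrizable (so $U$ is countable), a pointwise diagonalization of the bounded sequence $(f(e_n))$ yields a subsequence along which $f(e_{n_k})(p)$ converges for each $p\in U$. A gliding-hump/block-basis construction then produces vectors whose $f$-images have arbitrarily small values at each fixed isolated point. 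If these images retained a non-negligible uniform residue in $\co(U)$ across all of $U$, the self-similarity of $\ell_1$ would yield an almost isometric embedding of $\ell_1$ into $\co(U)$ with distortion strictly less than $2$, contradicting Aharoni. Hence along a further subsequence this residue vanishes, the $C(K)$-norm of $f(x)-f(y)$ on the extracted block subspace is essentially attained on $K'$, and choosing $\delta$ small enough yields $\dist(g)\leq 1+\varepsilon$ on this copy of $\ell_1$.

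The main obstacle lies in passing from the pointwise control at each individual $p\in U$ (which the diagonal extraction provides) to the uniform control over $\sup_{p\in U}|f(x)(p)-f(y)(p)|$ demanded by the $C(K)$-norm; this is precisely where the quantitative Aharoni-type obstruction has to be invoked, since any persistent uniform residue in $\co(U)$ would produce the forbidden almost isometric copy of $\ell_1$ in $\co(U)$. A secondary difficulty is that $f$ is only Lipschitz rather than linear, so the block vectors $e_{n_{2k}}-e_{n_{2k-1}}$ do not have $f$-images equal to $f(e_{n_{2k}})-f(e_{n_{2k-1}})$; the extraction must therefore be carried out on the uniformly Lipschitz family of real-valued functionals $\{\delta_p\circ f:p\in U\}$ on $\ell_1$ rather than on the basis alone, a step which constitutes the technical heart of the argument.
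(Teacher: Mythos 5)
Your setup is right---reduce to a single derivation step, take $g=R\circ f$ with $R\colon C(K)\to C(K')$ the restriction operator, and observe that only the lower Lipschitz bound is in question, failing exactly when $\norm{f(x)-f(y)}$ is carried by the isolated points---but the proof stops precisely where the work begins. You say yourself that the ``main obstacle'' is upgrading pointwise control on $U=K\setminus K'$ to uniform control and that this is ``where the quantitative Aharoni-type obstruction has to be invoked''; that step is never carried out, and the machinery you propose does not plausibly close it. Two concrete problems. First, the restrictions $f(x)|_U$ do not lie in $\co(U)$: only functions vanishing on $K'$ restrict into $\co(U)$ (for those, the set where $\abs{h}\geq\vare$ is a compact discrete, hence finite, subset of $U$), so a ``persistent uniform residue'' does not hand you a map into $\co(U)$ to which Aharoni's bound $c_{\co}(\ell_1)\geq 2$ applies; you would have to subtract off the $K'$-part, which is exactly the quantity you are trying to bound from below. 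Second, your extraction scheme (diagonalization over the countably many $p\in U$, gliding hump, passage to a block subspace) controls only countably many constraints, whereas the lower bound must hold for every pair $x,y$ in an entire isometric copy of $\ell_1$---an uncountable family---and since $f$ is merely Lipschitz, nothing about $f$ on a block basis propagates to $f$ on the closed span. You flag this as a ``secondary difficulty,'' but it is fatal to the block-subspace strategy as described.

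The paper closes the gap with a different, purely local idea applied pair by pair, so that the original $g$ works on all of $\ell_1$ with no passage to a subspace. Given $x,y$ of finite support with $\delta=\norm{x-y}_1$, the perturbed points $x+\delta e_i$ and $y+\delta e_j$ (for distinct $i,j$ beyond the supports) are at $\ell_1$-distance $3\delta$, so each such pair has a witness $\beta\in K$ with $\abs{f(x+\delta e_i)(\beta)-f(y+\delta e_j)(\beta)}\geq 3\delta/(1+\vare)$; the triangle inequality then forces, at that same $\beta$, both $\abs{f(x)(\beta)-f(y)(\beta)}\geq (1-2\vare)\delta/(1+\vare)$ and $\abs{f(x+\delta e_i)(\beta)-f(x+\delta e_j)(\beta)}\geq(2-\vare)\delta/(1+\vare)$. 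The second estimate says the infinitely many functions $f(x+\delta e_i)$ are bounded and uniformly separated on the set $L$ of all witnesses, so $L$ is infinite and therefore accumulates at a point of $K'$, where the first (closed) estimate persists. Hence $\norm{g(x)-g(y)}\geq(1-2\vare)\norm{x-y}_1/(1+\vare)$ and $\dist(g)\leq(1+\vare)/(1-2\vare)$, with no subsequence extraction and no appeal to $c_{\co}(\ell_1)\geq2$. The perturbation-by-$\delta e_i$ trick, which manufactures infinitely many witnesses for a single pair $(x,y)$, is the idea missing from your proposal.
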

\begin{proof}
  It is sufficient to show that if $\ell_1\aiembed  C(K)$, then
  $\ell_1\aiembed  C(K')$. Fix $\vare>0$ and let $f\colon
  C(K)\to\ell_1$ be a function satisfying
  \[
  \frac{\norm{x-y}_1}{1+\vare} \leq \norm{f(x)-f(y)}_\infty
  \leq\norm{x-y}_1\ .
  \]
  Define $g\colon \ell_1\to C(K')$ by letting $g(x)$ be the
  restriction of $f(x)$ to $K'$ ($x\in\ell_1$).  We are going to show
  that $\dist(g)\leq \frac{1+\vare}{1-2\vare}$, which then completes
  the proof.

  Fix distinct vectors $x,y\in\ell_1$ of finite support. Let
  $\delta=\norm{x-y}_1$ and $n_0=\max \supp(x)\cup\supp(y)$. For
  distinct integers $i,j>n_0$ we have
  \[
  \norm{f(x+\delta e_i)-f(y+ \delta
    e_j)}_{\infty}\geq\frac{3\delta}{1+\vare}\ .
  \]
  Hence there exists $\beta\in K$ such that
  \begin{equation}
    \label{equa1}
    \abs{f(x+\delta e_i)(\beta)-f(y+ \delta e_j)(\beta)}
    \geq\frac{3\delta}{1+\vare}\ .
  \end{equation}
  We next observe that if~\eqref{equa1} holds, then we also have
  \begin{equation}
    \label{eq:separated-seq}
    \abs{f(x+\delta e_i)(\beta)-f(x+\delta e_j)(\beta)}\geq
    \frac{(2-\vare)\delta}{1+\vare}\ ,
  \end{equation}
  and
  \begin{equation}
    \label{eq:separated-vec}
    \abs{f(x)(\beta)-f(y)(\beta)} \geq
  \frac{(1-2\vare)\delta}{1+\vare} =
  \frac{(1-2\vare)\norm{x-y}_1}{1+\vare}\ .
  \end{equation}
  Now let
  \[
  L=\big\{ \beta\in K:\,\E\text{ distinct } i, j>n_o \text{ satisfying
    equation }\eqref{equa1}\big\}\ .
  \]
  For $z\in\ell_1$ let $f_L(z)$ denote the restriction of $f(z)$ to
  $L$. By~\eqref{eq:separated-seq}, the sequence $\big(f_L(x+\delta
  e_i)\big)_{i>n_0}$ in $C(L)$ is bounded and
  $\frac{(2-\vare)\delta}{1+\vare}$-separated. It follows that $L$ is
  infinite, and so $L\cap
  K'\neq\emptyset$. By~\eqref{eq:separated-vec}, for any $\beta\in
  L\cap K'$ we have $\abs{f(x)(\beta)-f(y)(\beta)}\geq
  \frac{(1-2\vare)\norm{x-y}_1}{1+\vare}$. Thus
  \[
  \norm{g(x)-g(y)}\geq \frac{(1-2\vare)\norm{x-y}_1}{1+\vare}\ .
  \]
  This shows that $g\colon\ell_1\to C(K')$ is a bi-Lipschitz embedding
  with constant $\frac{1+\vare}{1-2\vare}$, as claimed.
\end{proof}
We conclude by stating some open problems. In light of the above
result, it is natural to ask the following.
\begin{qn}
  Does $\ell_1$ almost isometrically embed into
  $C([0,\omega^\omega])$?
\end{qn}
Recall that one cannot hope for an isometric embedding because of the
aformentioned result of Godefroy and Kalton~\cite{GodefroyKalton2003}. 

Recall also that using the techniques of Theorem~\ref{index}, Prochazka
and S\'anchez-Gonz\'alez~\cite{PSG} constructed a separable metric
space $M$ for which $c_{C(K)}(M)=2$ for any (infinite) countable
compact space $K$. However, it is not clear whether their example
embeds into $\ell_1$ isometrically (or with distortion less
than~$2$). Indeed, it is not known if their example isometrically
embeds into any Banach space which is not already universal for
$\sepb$. So the following open problems seem to be of interest.
\begin{qn}
  Is there some non-trivial class $\cC$ of Banach spaces and a
  countable compact space $K$ such that $C(K)$ is almost isometrically
  universal for the class $\cC$?
\end{qn}
The above question is deliberately vague. Examples we have in mind
for non-trivial classes include $\type, \cotype$ and $\sepref$. We
conclude with a more specific quantitative question.
\begin{qn}
  Let $\alpha\in[2,\omega_1)$. What is the exact value of
  $c_{C([0,\omega^\alpha])}(\cC)$ for $\cC\in\{\type, \cotype,
  \sepref\}$?
\end{qn}

\begin{bibsection}
\begin{biblist}

\bib{bourgain:79}{article}{
  author={Bourgain, J.},
  title={The Szlenk index and operators on $C(K)$-spaces},
  journal={Bull. Soc. Math. Belg. S\'er. B},
  volume={31},
  date={1979},
  number={1},
  pages={87--117},
  review={\MR {592664 (83j:46027)}},
}

\bib{odell:04}{article}{
  author={Odell, E.},
  title={Ordinal indices in Banach spaces},
  journal={Extracta Math.},
  volume={19},
  date={2004},
  number={1},
  pages={93--125},
  issn={0213-8743},
  review={\MR {2072539 (2005e:46021)}},
}

\bib{Aharoni1974}{article}{
  author={Aharoni, I.},
  title={Every separable metric space is Lipschitz equivalent to a subset of $c\sp {+}\sb {0}$},
  journal={Israel J. Math.},
  volume={19},
  date={1974},
  pages={284--291},
}

\bib{AlbiacKalton2006}{book}{
  author={Albiac, F.},
  author={Kalton, N. J.},
  title={Topics in Banach space theory},
  series={Graduate Texts in Mathematics},
  volume={233},
  publisher={Springer},
  place={New York},
  date={2006},
  pages={xii+373},
}

\bib{Assouad1978}{article}{
  author={Assouad, P.},
  title={Remarques sur un article de Israel Aharoni sur les prolongements lipschitziens dans $c\sb {0}$ (Israel J. Math. 19 (1974), 284--291)},
  journal={Israel J. Math.},
  volume={31},
  date={1978},
  pages={97--100},
}

\bib{BanachMazur33}{article}{
  author={Banach, S.},
  author={Mazur, S.},
  title={Zur Theorie der linearen Dimension},
  journal={Studia Math.},
  volume={4},
  date={1933},
  pages={100\ndash 112},
}

\bib{Frechet1906}{article}{
  author={Fr\'echet, M.},
  title={Sur quelques points du calcul fonctionel},
  journal={Rend. Circ. Mat Palermo Math.},
  volume={22},
  date={1906},
  pages={1\ndash 71},
}

\bib{GodefroyKalton2003}{article}{
  author={Godefroy, G.},
  author={Kalton, N. J.},
  title={Lipschitz-free Banach spaces},
  journal={Studia Math.},
  volume={159},
  date={2003},
  pages={121\ndash 141},
}

\bib{KaltonLancien2008}{article}{
  author={Kalton, N. J.},
  author={Lancien, G.},
  title={Best constants for Lipschitz embeddings of metric spaces into $\co$},
  journal={Fund. Mat.},
  volume={3},
  date={2008},
  pages={249-272},
}

\bib{PSG}{misc}{
   author={Proch\'azka, A.},
   author={S\'anchez-Gonz\'alez, L.},
   title={Low distortion embeddings into Asplund Banach spaces},
   status={preprint},
}

\bib{Pelant1994}{article}{
  author={Pelant, J.},
  title={Embeddings into $c\sb 0$},
  journal={Topology Appl.},
  volume={57},
  date={1994},
  pages={259--269},
}

\end{biblist}
\end{bibsection}

\end{document}